\title{Conic support measures}
\author{Rolf Schneider}
\date{}
\newcommand{\Sd}{{\mathbb S}^{d-1}}
\newcommand{\R}{{\mathbb R}}
\newcommand{\C}{{\mathcal C}}
\newcommand{\bP}{{\mathbb P}}
\newcommand{\cP}{{\mathcal P}}
\newcommand{\Rd}{{\mathbb R}^d}
\newcommand{\N}{{\mathbb N}}
\newcommand{\B}{\mathcal{B}}
\newcommand{\D}{{\rm d}}
\newcommand{\F}{{\mathcal F}}
\newcommand{\PC}{{\mathcal PC}}
\newcommand{\bE}{{\mathbb E}\,}
\newcommand{\g}{{\bf g}}
\newcommand{\BP}{{\mathbb P}}
  \newcommand{\fed}{\,\rule{.1mm}{.20cm}\rule{.20cm}{.1mm}\,}
\newtheorem{theorem}{Theorem}[section]
\newtheorem{lemma}{Lemma}[section]
\newtheorem{definition}{Definition}[section]
\begin{document}
\maketitle

\begin{abstract}
The conic support measures localize the conic intrinsic volumes of closed convex cones in the same way as the support measures of convex bodies localize the intrinsic volumes of convex bodies. In this note, we extend the `Master Steiner formula' of McCoy and Tropp, which involves conic intrinsic volumes, to conic support measures. Then we prove H\"{o}lder continuity of the conic support measures with respect to the angular Hausdorff metric on convex cones and a metric on conic support measures which metrizes the weak convergence.\\[1mm]
{\bf Keywords:}  Conic support measure, conic intrinsic volume, Master Steiner formula, H\"older continuity\\[1mm]
{\bf Mathematics Subject Classification:} Primary 52A20, Secondary 52A55
\end{abstract}

\section{Introduction}\label{sec1}

It is well known that the intrinsic volumes, which play a prominent role in the Brunn--Minkowski theory and in the integral geometry of convex bodies, have local versions in the form of measures. One type of these, the area measures of convex bodies, was introduced independently by Aleksandrov \cite{Ale37a} and by Fenchel and Jessen \cite{FJ38}. The other series, the curvature measures, was defined and thoroughly studied by Federer \cite{Fed59} (more generally, for sets of positive reach). A common generalization, the support measures, of which area and curvature measures are marginal measures, was apparently first introduced (though not under this name) and used in \cite{Sch79}. Support measures have their name from the fact that they are defined on Borel sets of support elements; a support element of a convex body $K$ is a pair $(x,u)$, where $x$ is a boundary point of $K$ and $u$ is an outer unit normal vector of $K$ at $x$. Characterization theorems of the three series of measures are proved in \cite{Sch75a}, \cite{Sch78}, \cite{Gla97}, with integral-geometric applications in \cite[formula (4.3)]{Sch75b}, \cite[formula (7.1)]{Sch78}, \cite[Theorem 3.1]{Gla97}. Integral-geometric formulas for support measures appear in \cite{Sch86}, \cite{Sch88}, \cite{Gla97}. We refer also to the surveys \cite{Sch79b}, \cite{Sch93} on curvature and surface area measures, the surveys \cite{SW93}, \cite{HS02} on integral geometric formulas, and to Chapter 4 in the book \cite{Sch14}.

In spherical integral geometry, the role of the Euclidean intrinsic volumes is taken over by the spherical intrinsic volumes. Their equivalent counterparts for closed convex cones in $\R^d$ are known as conic intrinsic volumes. In spherical space, the local integral geometry of convex bodies, involving spherical analogues of support measures and curvature measures, was developed by Glasauer \cite{Gla95}; a short summary appears in \cite{Gla96}. Parts of this, as far as they are relevant for stochastic geometry, are reproduced in \cite[Section 6.5]{SW08}.

The spherical kinematic formula, together with the spherical Gauss--Bonnet theorem, can be used to answer the following question, in terms of a formula involving conic intrinsic volumes. If $C,D$ are closed convex cones in $\Rd$, and if a uniform random rotation $\bf \Theta$ is applied to $D$, what is the probability that $C$ and ${\bf \Theta}D$ have a non-trivial intersection? This question, mainly for polyhedral cones, and subsequent studies of conic intrinsic volumes, have recently played a central role in various investigations on conic optimization and signal demixing under certain random models (see, e.g. \cite{AB12, AB15, ALMT14, GNP17, MT14a}). This caused renewed interest in spherical integral geometry and was an incentive to reconsider parts of it from the conic viewpoint, to develop new proofs and to extend some results; see \cite{Ame15}, \cite{AL17}, \cite{Sch17}. We mention that also a local version of the conic kinematic formula has proved useful for applications (see \cite{AB15}). It may, therefore, be the right time for a more detailed study of the conic support measures, which are the local versions of the conic intrinsic volumes. In the following, we begin such a study. Our first main aim is to localize the `Master Steiner formula' of McCoy and Tropp \cite{MT14b}; see Theorem \ref{T3.4}. Then we prove a H\"older continuity property of the conic support measures, with respect to the angular Hausdorff metric on the convex cones and a suitable metric on the conic support measures, which metrizes the weak convergence. The result is Theorem \ref{T4.1}.

\section{Preliminaries on convex cones}\label{sec2}

We work in the $d$-dimensional real vector space $\Rd$ ($d\ge 2$), with scalar product $\langle\cdot\,,\cdot\rangle$ and induced norm $\|\cdot\|$. The origin of $\Rd$ is denoted by $o$, and the unit sphere by $\Sd$. The total spherical Lebesgue measure of $\Sd$ is given by $\omega_d =2\pi^{d/2}/\Gamma(d/2)$. By $\C^d$ we denote the set of closed convex cones (with apex $o$) in $\R^d$. Occasionally, we have to exclude the cone $\{o\}$; therefore, we introduce the notation $\C^d_*$ for the set of cones in $\C^d$ of positive dimension. For $C\in \C^d$, the dual cone is defined by
$$ C^\circ = \{x\in\Rd: \langle x,y\rangle\le 0\mbox{ for all }y\in C\}.$$
Writing $C^{\circ\circ}=(C^\circ)^\circ$, we have $C^{\circ\circ}=C$. The {\em nearest point map} or {\em metric projection} $\Pi_C:\Rd\to C$ associates with $x\in \R^d$ the unique point $\Pi_C(x)\in C$ for which 
$$ \|x-\Pi_C(x)\| =\min\{\|x-y\|:y\in C\}.$$
It satisfies $\Pi_C(\lambda x)=\lambda\Pi_C(x)$ for $x\in\Rd$ and $\lambda\ge 0$.

For two vectors $x,y\in\Rd$, we denote by $d_a(x,y)$ the angle between them, thus
$$ d_a(x,y)= \arccos\left\langle \frac{x}{\|x\|},\frac{y}{\|y\|}\right\rangle \quad\mbox{if }x,y\not=o.$$
Restricted to unit vectors, this yields the usual geodesic metric on $\Sd$. The definition is supplemented by $d_a(o,o)=0$ and $d_a(x,o)=d_a(o,x)=\pi/2$ for $x\in\Rd\setminus\{o\}$. Let $C\in \C^d_*$. The {\em angular distance} $d_a(x,C)$ of a point $x\in\Rd$ from $C$ is defined by
$$ d_a(x,C):=\arccos\frac{\|\Pi_C(x)\|}{\|x\|} \quad\mbox{if } x\not=o,$$
and $d_a(o,C)=\pi/2$. Thus,
$$ d_a(x,C) =\left\{\begin{array}{ll} \min\{d_a(x,y): y\in C\} & \mbox{if } x\notin C^\circ,\\[1mm] \pi/2 & \mbox{if } x\in C^\circ.\end{array}\right.$$ 
From the Moreau decomposition (Moreau \cite{Mor62})
$$\Pi_C(x)+\Pi_{C^\circ}(x)=x,\qquad\langle \Pi_C(x),\Pi_{C^\circ}(x)\rangle =0$$ 
for $C\in\C^d$ and $x\in\Rd$ it follows that
\begin{equation}\label{2.1}
d_a(x,C) +d_a(x,C^\circ)= \pi/2\quad\mbox{for } x\in\Rd\setminus(C\cup C^\circ).
\end{equation}

For $C\in\C^d_*$ and $\lambda \ge 0$, we define the {\em angular parallel set} of $C$ at distance $\lambda$ by
$$ C^a_\lambda := \{x\in\R^d: d_a(x,C)\le\lambda\}.$$
The {\em angular Hausdorff distance} of $C,D\in\C^d_*$ is then defined 
by
$$ \delta_a(C,D):= \min\{\lambda\ge 0: C\subseteq D^a_\lambda,\; D\subseteq C^a_\lambda\}.$$
Similarly as for the usual Hausdorff metric, one shows that this defines a metric on $\C^d_*$. With respect to this metric, polarity is a local isometry.

\begin{lemma}\label{L2.1}
If $C,D\in\C^d_*$ are cones $\not=\R^d$ with $\delta_a(C,D) <\pi/2$, then
$$ \delta_a(C^\circ,D^\circ)=\delta_a(C,D).$$
\end{lemma}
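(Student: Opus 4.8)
The plan is to prove only the single inequality $\delta_a(C^\circ,D^\circ)\le\delta_a(C,D)$ under the stated hypotheses, and then to deduce equality by a bootstrapping argument through polarity. Since $C^{\circ\circ}=C$ and $D^{\circ\circ}=D$, and since $C\ne\R^d$ forces $C^\circ\ne\{o\}$ while $C\ne\{o\}$ (which holds because $C\in\C^d_*$) forces $C^\circ\ne\R^d$, the polars $C^\circ,D^\circ$ again lie in $\C^d_*$ and differ from $\R^d$. Thus, once the inequality is established in general, applying it to the pair $(C^\circ,D^\circ)$ is legitimate: the inequality itself yields $\delta_a(C^\circ,D^\circ)\le\delta_a(C,D)<\pi/2$, so the hypothesis is met, and we obtain $\delta_a(C,D)=\delta_a(C^{\circ\circ},D^{\circ\circ})\le\delta_a(C^\circ,D^\circ)$. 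The two inequalities combine to the asserted equality.

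Write $\delta:=\delta_a(C,D)<\pi/2$. To show $\delta_a(C^\circ,D^\circ)\le\delta$ it suffices to verify the two parallel-set inclusions $C^\circ\subseteq(D^\circ)^a_\delta$ and $D^\circ\subseteq(C^\circ)^a_\delta$, and by the symmetry of the roles of $C$ and $D$ I would treat only the first. Since all sets in question are cones and $d_a(\cdot\,,\cdot)$ is invariant under positive scaling of its first argument, this reduces to checking $d_a(y,D^\circ)\le\delta$ for every unit vector $y\in C^\circ$. I would distinguish three positions of $y$. If $y\in D^\circ$, then $d_a(y,D^\circ)=0$. The position $y\in D$ is impossible: $y\in C^\circ$ gives $d_a(y,C)=\pi/2$, whereas $D\subseteq C^a_\delta$ would force $d_a(y,C)\le\delta<\pi/2$. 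In the remaining case $y\notin D\cup D^\circ$, the Moreau relation \eqref{2.1}, applied to $D$, gives $d_a(y,D^\circ)=\pi/2-d_a(y,D)$, so the inclusion reduces to the lower bound $d_a(y,D)\ge\pi/2-\delta$.

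The heart of the matter is this lower bound, which I would obtain from a single use of the spherical triangle inequality. For $\lambda<\pi/2$ the angular parallel set agrees on $\Sd$ with the ordinary spherical $\lambda$-neighbourhood, the truncation at $\pi/2$ built into $d_a$ being inactive; hence $D\subseteq C^a_\delta$ means precisely that every unit $w\in D$ has a point of $C\cap\Sd$ within geodesic distance $\delta$. Choosing a nearest point $w_0\in D\cap\Sd$ to $y$ (which exists by compactness) and a point $x_0\in C\cap\Sd$ with $d_a(w_0,x_0)\le\delta$, the triangle inequality gives $d_a(y,C\cap\Sd)\le d_a(y,x_0)\le d_a(y,w_0)+\delta=d_a(y,D)+\delta$. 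Since $y\in C^\circ$ forces $d_a(y,x)\ge\pi/2$ for every $x\in C\cap\Sd$, the left-hand side is at least $\pi/2$, whence $d_a(y,D)\ge\pi/2-\delta$, as required.

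The main obstacle, and the point needing the most care, is the interplay between the truncation of $d_a$ at $\pi/2$ (which makes it only a truncated spherical distance, equal to $\pi/2$ throughout the dual cone) and the case analysis. One must verify that for sublevels $\lambda<\pi/2$ the parallel sets genuinely coincide with honest spherical neighbourhoods so that the triangle inequality applies; one must exclude the degenerate placements $y\in D$ and $y\in C$ where the value $\pi/2$ would spoil the estimate; and one must treat the apex $o$ separately, since it lies in every cone and is accounted for only by restricting the inclusions to unit vectors. Once these points are settled, the three-case computation above closes the argument, and the polarity bootstrap upgrades the inequality to the desired equality.
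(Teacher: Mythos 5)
Your proof is correct, and it is worth noting that the paper itself contains no argument for Lemma \ref{L2.1}: it only refers to Glasauer \cite[Hilfssatz 2.2]{Gla95} for the spherical counterpart and asserts that the translation to cones is routine. Your proposal therefore supplies a complete, self-contained substitute for what the paper leaves to a citation, and the route you take is the natural one: prove the single inequality $\delta_a(C^\circ,D^\circ)\le\delta_a(C,D)$ and then upgrade it to an equality through biduality $C^{\circ\circ}=C$, the inequality itself guaranteeing that the dual pair $(C^\circ,D^\circ)$ satisfies the hypothesis $\delta_a(C^\circ,D^\circ)<\pi/2$, while $C,D\ne\R^d$ and $C,D\ne\{o\}$ guarantee $C^\circ,D^\circ\in\C^d_*$ and $C^\circ,D^\circ\ne\R^d$. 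The core estimate is sound: writing $\delta:=\delta_a(C,D)$, for a unit vector $y\in C^\circ$ with $y\notin D\cup D^\circ$ the Moreau relation (\ref{2.1}) applied to $D$ reduces the claim $d_a(y,D^\circ)\le\delta$ to the lower bound $d_a(y,D)\ge\pi/2-\delta$, which you obtain from the spherical triangle inequality through a nearest point $w_0\in D\cap\Sd$ and a point $x_0\in C\cap\Sd$ with $d_a(w_0,x_0)\le\delta$, combined with $d_a(y,x_0)\ge\pi/2$ for $y\in C^\circ$; and the case $y\in D$ is correctly ruled out because $d_a(y,C)=\pi/2$ would contradict $D\subseteq C^a_\delta$ with $\delta<\pi/2$. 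You were also right to single out the apex issue: under the paper's literal convention $d_a(o,\cdot)=\pi/2$, the inclusion $C\subseteq D^a_\lambda$ would fail at $o$ for every $\lambda<\pi/2$, so the angular Hausdorff distance must be read on unit vectors (equivalently, as the spherical Hausdorff distance of $C\cap\Sd$ and $D\cap\Sd$); that is the only interpretation under which the lemma is non-vacuous, and your proof uses it consistently.
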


The proof given by Glasauer \cite[Hilfssatz 2.2]{Gla95} for the spherical counterpart is easily translated to the conic situation.

The following two lemmas estimate the distance between the projections $\Pi_C(x)$ and $\Pi_D(x)$, either Euclidean or angular, if $C$ and $D$ have small angular Hausdorff distance.

\begin{lemma}\label{L2.1a}
Let $C,D \in \C^d_*$ and $x \in \R^d$. Then
$$ \|\Pi_C(x)-\Pi_D(x)\| \le \|x\|  \sqrt{10 \delta_a(C,D)}.$$
\end{lemma}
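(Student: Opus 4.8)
The plan is to reduce to a unit argument by homogeneity and then to bound the inner product $\langle\Pi_C(u),\Pi_D(u)\rangle$ from below. Since $\Pi_C(o)=\Pi_D(o)=o$, the inequality is trivial for $x=o$; for $x\neq o$ the relation $\Pi_C(\lambda x)=\lambda\Pi_C(x)$ gives $\Pi_C(x)=\|x\|\,\Pi_C(u)$ with $u=x/\|x\|\in\Sd$, and likewise for $D$, so $\|\Pi_C(x)-\Pi_D(x)\|=\|x\|\,\|\Pi_C(u)-\Pi_D(u)\|$ and it suffices to prove the bound for $u\in\Sd$. Write $\lambda:=\delta_a(C,D)$, $p:=\Pi_C(u)$, $q:=\Pi_D(u)$. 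From the Moreau decomposition I record $\langle u,p\rangle=\|p\|^2$, $u-p\in C^\circ$, and $\|p\|\le 1$ (and symmetrically for $q$), so that $\|p-q\|^2=\|p\|^2+\|q\|^2-2\langle p,q\rangle$ and everything reduces to a good lower bound for $\langle p,q\rangle$. Since $\|p-q\|\le\|p\|+\|q\|\le 2$, the claim is trivial once $\sqrt{10\lambda}\ge 2$, i.e. for $\lambda\ge 2/5$; hence I may assume $\lambda<\pi/2$ throughout.

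The decisive step is the estimate $\langle p,q\rangle\ge\|p\|^2-2\sin\lambda$. Its geometric input is that $p$ is a \emph{non-zero} vector of $C$, so the inclusion $C\subseteq D^a_\lambda$ gives $d_a(p,D)\le\lambda$; consequently $p':=\Pi_D(p)\in D$ satisfies $\|p-p'\|=\|p\|\sin d_a(p,D)\le\sin\lambda$ (for $p=o$ one has $p'=o$ and the inequalities below hold trivially). Now $u-q\in D^\circ$ and $p'\in D$ give $\langle u-q,p'\rangle\le 0$, i.e. $\langle q,p'\rangle\ge\langle u,p'\rangle$. Combining $\langle u,p'\rangle=\langle u,p\rangle+\langle u,p'-p\rangle\ge\|p\|^2-\sin\lambda$ with $\langle p,q\rangle=\langle q,p'\rangle+\langle q,p-p'\rangle\ge\langle u,p'\rangle-\|q\|\,\|p-p'\|$ and $\|q\|\le 1$ then yields $\langle p,q\rangle\ge\|p\|^2-2\sin\lambda$ after chaining the bounds.

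Interchanging the roles of $C$ and $D$ (now using $D\subseteq C^a_\lambda$, $q':=\Pi_C(q)$, and $u-p\in C^\circ$) gives the companion bound $\langle p,q\rangle\ge\|q\|^2-2\sin\lambda$. Feeding the first bound into $\|p-q\|^2=\|p\|^2+\|q\|^2-2\langle p,q\rangle$ gives $\|p-q\|^2\le\|q\|^2-\|p\|^2+4\sin\lambda$, the second gives $\|p-q\|^2\le\|p\|^2-\|q\|^2+4\sin\lambda$, and adding the two cancels the $\|p\|^2,\|q\|^2$ terms to leave $\|p-q\|^2\le 4\sin\lambda\le 4\lambda\le 10\lambda$. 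Taking square roots and restoring the factor $\|x\|$ finishes the proof (indeed with the better constant $4$). I expect the main obstacle to be precisely this inner-product estimate: the naive route of bounding $\|p-q\|$ through the individual quantities $\|u-p\|=\sin d_a(u,C)$ and $\|u-q\|=\sin d_a(u,D)$ is hopeless, since these can be of order $1$ even when $C=D$; one must instead exploit that it is the \emph{displacement of a point of one cone under projection onto the other} that is controlled by $\delta_a$. A minor point is the convention $d_a(o,\cdot)=\pi/2$: the inclusion $C\subseteq D^a_\lambda$ is invoked only for the non-zero vector $p$, for which $d_a(p,D)\le\lambda$ is exactly the intended meaning.
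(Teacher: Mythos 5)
Your proof is correct, but it takes a genuinely different route from the paper's. The paper truncates the cones by the ball $B_{\|x\|}$, so that $\Pi_C(x)=p(K,x)$ and $\Pi_D(x)=p(L,x)$ for the convex bodies $K=C\cap B_{\|x\|}$ and $L=D\cap B_{\|x\|}$, notes by elementary geometry that the Euclidean Hausdorff distance satisfies $\delta(K,L)\le\delta_a(C,D)\|x\|$, and then cites the known H\"older estimate for nearest-point maps of convex bodies, \cite[Lem. 1.8.11]{Sch14} --- which is precisely where the constant $10$ comes from. You instead work directly with the cones: after the (valid) homogeneity reduction, the Moreau decomposition gives $\langle u,p\rangle=\|p\|^2$, $u-q\in D^\circ$ and $\|p-\Pi_D(p)\|=\|p\|\sin d_a(p,D)\le\sin\delta_a$, and the variational inequality $\langle u-q,\Pi_D(p)\rangle\le 0$ then yields $\langle p,q\rangle\ge\|p\|^2-2\sin\delta_a$; symmetrizing in $C,D$ and adding the two resulting bounds cancels the $\|p\|^2,\|q\|^2$ terms, which is the nice trick that avoids any case distinction over which of $\|p\|,\|q\|$ is larger. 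Your case handling ($x=o$, the trivial range $\delta_a\ge 2/5$, and $p=o$) is complete, each inner-product step checks out, and you in fact obtain the sharper conclusion $\|\Pi_C(x)-\Pi_D(x)\|\le 2\|x\|\sqrt{\sin\delta_a(C,D)}$. What the paper's route buys is brevity, by outsourcing the analytic work to an established lemma on convex bodies; what yours buys is a self-contained, purely conic argument with a better constant, at the price of a longer chain of estimates.
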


\begin{proof}
Let $B_{\|x\|}$ denote the ball with center $o$ and radius $\|x\|$, and let $K:=C\cap B_{\|x\|}$ and $L:= D\cap B_{\|x\|}$. We have $\Pi_C(x)\in B_{\|x\|}$ and hence $\Pi_C(x)=p(K,x)$, where $p(K,\cdot)$ denotes the nearest-point map of the convex body $K$ (as in \cite[Sec. 1.2]{Sch14}); similarly $\Pi_D(x)= p(L,x)$. Elementary geometry together with a rough estimate shows that the Euclidean Hausdorff distance of $K$ and $L$ satisfies $\delta(K,L) \le \delta_a(C,D)\|x\|$. Now \cite[Lem. 1.8.11]{Sch14} yields
$$ \|\Pi_C(x)-\Pi_D(x)\|= \|p(K,x)-p(L,x)\| \le \sqrt{10\|x\|\delta(K,L)} \le \|x\|\sqrt{10 \delta_a(C,D)},$$
as asserted. 
\end{proof}

\begin{lemma}\label{L2.2}
Let $C,D\in\C^d_*$ and $x\in\R^d$. Suppose there exists $\varepsilon>0$ with $\delta_a(C,D)\le\pi/2-\varepsilon$, $d_a(x,C)\le\pi/2-\varepsilon$, $d_a(x,D)\le\pi/2-\varepsilon$. Then
$$ \cos d_a(\Pi_C(x),\Pi_D(x)) \ge 1-c_\varepsilon \delta_a(C,D)$$
with $c_\varepsilon:= 2(\pi^{-1}+\tan(\pi/2-\varepsilon))$.
\end{lemma}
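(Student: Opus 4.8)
The plan is to work directly with the normalized inner product, since by definition
$$\cos d_a(\Pi_C(x),\Pi_D(x))=\frac{\langle \Pi_C(x),\Pi_D(x)\rangle}{\|\Pi_C(x)\|\,\|\Pi_D(x)\|},$$
and to bound this from below. Write $p:=\Pi_C(x)$, $q:=\Pi_D(x)$, $\lambda:=\delta_a(C,D)$, $\alpha:=d_a(x,C)$ and $\beta:=d_a(x,D)$. First I would record the elementary consequences of the hypotheses: since $\alpha,\beta\le\pi/2-\varepsilon<\pi/2$, the point $x$ lies in neither $C^\circ$ nor $D^\circ$, so $p,q\neq o$ and the angular distance is defined; the definition of $d_a(\cdot\,,C)$ gives $\|p\|=\|x\|\cos\alpha$ and $\|q\|=\|x\|\cos\beta$ with $\cos\alpha,\cos\beta\ge\sin\varepsilon>0$, while the Moreau decomposition yields $x-p\in C^\circ$ with $\|x-p\|=\|x\|\sin\alpha$ (and symmetrically for $q$). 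The one genuinely geometric input is that $\lambda$ controls how far each projection lies from the \emph{other} cone: from $p\in C\subseteq D^a_\lambda$ and $q\in D\subseteq C^a_\lambda$ we obtain $d_a(p,D)\le\lambda$ and $d_a(q,C)\le\lambda$.

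The heart of the argument is to estimate the cross term $\langle x-p,q\rangle$. Since $\langle x-q,q\rangle=0$ gives $\langle x,q\rangle=\|q\|^2$, splitting $\langle x,q\rangle=\langle p,q\rangle+\langle x-p,q\rangle$ produces the identity $\langle p,q\rangle=\|q\|^2-\langle x-p,q\rangle$, so it suffices to bound $\langle x-p,q\rangle$ from above. Here I would exploit that $q$ is \emph{almost} in $C$: writing $q':=\Pi_C(q)\in C$, we have $q-q'=\Pi_{C^\circ}(q)\in C^\circ$ with $\|q-q'\|=\|q\|\sin d_a(q,C)\le\|q\|\sin\lambda$. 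Since $x-p\in C^\circ$ and $q'\in C$, the term $\langle x-p,q'\rangle\le 0$ drops out, and Cauchy--Schwarz on the remainder gives
$$\langle x-p,q\rangle\le\langle x-p,q-q'\rangle\le\|x-p\|\,\|q-q'\|\le\|x\|\,\|q\|\sin\alpha\sin\lambda.$$
Dividing the resulting lower bound on $\langle p,q\rangle$ by $\|p\|\,\|q\|$ yields the asymmetric estimate $\cos d_a(p,q)\ge(\cos\beta-\sin\alpha\sin\lambda)/\cos\alpha$, and the same computation with the roles of $C,D$ (hence $p,q$) interchanged gives $\cos d_a(p,q)\ge(\cos\alpha-\sin\beta\sin\lambda)/\cos\beta$.

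Finally I would symmetrize by averaging the two lower bounds, obtaining
$$\cos d_a(p,q)\ge\frac{\cos^2\alpha+\cos^2\beta}{2\cos\alpha\cos\beta}-\frac{\sin\lambda}{2}(\tan\alpha+\tan\beta).$$
The first summand is $\ge 1$ by the inequality of arithmetic and geometric means, and since $\tan$ is increasing on $[0,\pi/2)$ with $\alpha,\beta\le\pi/2-\varepsilon$ we have $\tan\alpha+\tan\beta\le 2\tan(\pi/2-\varepsilon)$; using $\sin\lambda\le\lambda$ this already yields $\cos d_a(p,q)\ge 1-\lambda\tan(\pi/2-\varepsilon)$, which is in fact stronger than the asserted inequality since $\tan(\pi/2-\varepsilon)\le c_\varepsilon$.

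I expect the main obstacle to be the cross-term estimate of the second paragraph: the naive bounds — invoking Lemma \ref{L2.1a}, or the triangle inequality $d_a(p,q)\le d_a(x,C)+d_a(x,D)$ — are far too lossy, the former giving only a $\sqrt{\lambda}$ rate and the latter a bound of order $\pi/2-\varepsilon$ that does not even tend to $0$ with $\lambda$. The decisive point is to keep the error in the form $\langle x-p,q\rangle$ and to split $q$ along its $C$-projection, so that the leading part $\langle x-p,q'\rangle$ is annihilated by the duality $x-p\in C^\circ$, $q'\in C$, and only the small normal part $q-q'$ (of size $\|q\|\sin\lambda$) survives. The subsequent symmetrization together with the arithmetic--geometric mean inequality is then what converts the two one-sided estimates into a clean bound whose leading term is exactly $1$.
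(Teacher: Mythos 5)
Your proof is correct, and it takes a genuinely different route from the paper's. The paper argues spherically: after disposing of the case $x\in C\cup D$ separately, it places the unit vectors $p=\Pi_C(x)/\|\Pi_C(x)\|$, $q=\Pi_D(x)/\|\Pi_D(x)\|$, $e=\Pi_{C^\circ}(x)/\|\Pi_{C^\circ}(x)\|$ on a two-dimensional sphere, determines the extremal position of $q$ subject to two constraints --- $q$ lies in the cap of radius $d_a(x,C)+\delta_a$ about $x/\|x\|$, and $d_a(q,e)\ge\pi/2-\delta_a$, which comes from the polarity relation (\ref{2.1}) --- and then evaluates $\cos d_a(p,q)$ trigonometrically in that worst case. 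You instead stay in $\R^d$ and work with inner products: the Moreau decomposition gives $\langle p,q\rangle=\|q\|^2-\langle x-p,q\rangle$, and your decisive step is to split $q$ along $\Pi_C(q)$ so that the duality pairing of $x-p\in C^\circ$ with $\Pi_C(q)\in C$ annihilates the leading term, leaving only an error of size $\|x\|\,\|q\|\sin\alpha\sin\lambda$; averaging the two asymmetric bounds and applying the arithmetic--geometric mean inequality then finishes the argument. Every step checks out (the identities $\|p\|=\|x\|\cos\alpha$, $\|x-p\|=\|x\|\sin\alpha$, the containments $C\subseteq D^a_\lambda$, $D\subseteq C^a_\lambda$, and the monotonicity facts are all used correctly), and your version has some concrete advantages: it needs no case distinction for $x\in C\cup D$, no reduction to a $2$-sphere, it replaces the paper's somewhat informally justified extremal-configuration step (``the largest possible angular distance is attained if\dots'') by an explicit inequality chain, and it yields the sharper constant $\tan(\pi/2-\varepsilon)$ in place of $c_\varepsilon=2(\pi^{-1}+\tan(\pi/2-\varepsilon))$, so the stated bound follows a fortiori. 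What the paper's route buys in exchange is the geometric picture consistent with Glasauer's spherical setting, from which the lemma and its constant were originally derived.
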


\begin{proof}
Put $\delta_a(C,D)=\delta_a$. If $x\in C\cup D$, say $x\in C$, then $d_a(\Pi_C(x),\Pi_D(x)) = d_a(x,\Pi_D(x))\le \delta_a$ and hence
$$ \cos d_a(\Pi_C(x),\Pi_D(x)) \ge \cos \delta_a \ge 1-\frac{2}{\pi}\delta_a\ge 1-c_\varepsilon\delta_a,$$
as asserted. We can, therefore, assume in the following that $x\notin C\cup D$.

We have $\Pi_C(x)\not=o$, since $d_a(x,C)<\pi/2$. Set $p:= \Pi_C(x)/\|\Pi_C(x)\|$. Similarly, $\Pi_D(x)\not=o$, and we set $q:= \Pi_D(x)/\|\Pi_D(x)\|$. Let $e:= \Pi_{C^\circ}(x)/\|\Pi_{C^\circ}(x)\|$ (we have $\Pi_{C^\circ}(x)\not=o$, since $x\notin C$).

We can assume that there is a two-dimensional unit sphere ${\mathbb S}^2$ containing the three unit vectors $p,q,e$ (${\mathbb S}^2\subseteq\Sd$ if $d\ge 3$; and if $d=2$, we embed $\R^2$ in $\R^3$). We have $x_0:=x/\|x\|\in{\mathbb S}^2$, since $x$ is a linear combination of $p$ and $e$. Put $d_a(x,C)=d_a$ (this is also the spherical distance of $x_0$ and $p$). The spherical cap in ${\mathbb S}^2$ with center $x_0$ and spherical radius $d_a+\delta_a$ contains $q$ (since $d_a(x,\Pi_C(x))= d_a$ and there is a point $z\in D$ with $d_a(z,\Pi_C(x))\le \delta_a$). On the other hand, $d_a(q,C)\le \delta_a$ since $q\in D$. By (\ref{2.1}), $d_a(q,C)+d_a(q,C^\circ)=\pi/2$. Since $e\in C^\circ$, we get
$$ d_a(q,e)\ge d_a(q,C^\circ) =\frac{\pi}{2}-d_a(q,C) \ge \frac{\pi}{2} -\delta_a.$$
Therefore, the largest possible angular distance between $p$ and $q$ is attained if 
$$ q= w\cos\delta_a+ e\sin\delta_a \quad \mbox{with } w\in e^\perp\cap{\mathbb S}^2$$
and 
$$ \langle x_0,q\rangle = \cos(d_a+\delta_a).$$
If $q$ satisfies these equations, then
\begin{eqnarray*}
\cos(d_a+\delta_a) &=& \langle x_0,q\rangle = \langle p\cos d_a+e\sin d_a, w\cos\delta_a +e\sin\delta_a\rangle\\
&=& \langle p,w\rangle \cos d_a\cos\delta_a+\sin d_a\sin\delta_a.
\end{eqnarray*}
Consequently,
\begin{eqnarray*} 
\cos d_a(p,q) &=&\langle p,q\rangle =\langle p,w\rangle \cos\delta_a\\
&=& \frac{1}{\cos d_a}[\cos(d_a+\delta_a)-\sin d_a\sin\delta_a]\\
&=& \cos\delta_a-2(\tan d_a)\sin\delta_a\\
&\ge& 1-\frac{2}{\pi}\delta_a  -2(\tan d_a)\delta_a.
\end{eqnarray*}
From this, the assertion follows. 
\end{proof}

\section{The `Master Steiner formula', localized}\label{sec3}

First we consider polyhedral cones. Let $\cP\C^d\subset\C^d$ denote the subset of polyhedral cones, and $\cP\C^d_*=\cP\C^d\cap\C^d_*$. For $P\in\cP\C^d$ and $k\in\{0,\dots,d\}$, we denote by $\F_k(C)$ the set of $k$-dimensional faces of $C$ (which are again polyhedral cones). The $k$-skeleton of $C$ is defined by 
$$ {\rm skel}_k(C)= \bigcup_{F\in\F(C)} {\rm relint}\,F,$$
where ${\rm relint}$ denotes the relative interior. 

We denote by $\B(X)$ the $\sigma$-algebra of Borel sets in a topological space $X$ and define the `conic $\sigma$-algebra'
$$ \widehat\B(\Rd) = \{ A\in\B(\Rd): \lambda a \in A \mbox{ for $a\in A$ and $\lambda>0$}\}$$
and the `biconic $\sigma$-algebra' (terminology from \cite{AB12}, see also \cite{Ame15})
$$ \widehat\B(\Rd\times\Rd) = \{\eta\in\B(\Rd\times\Rd): (\lambda x,\mu y)\in \eta\mbox{ for $(x,y)\in\eta$ and $\lambda,\mu>0$}\},$$
where $\Rd\times\Rd$ is endowed with the product topology. Clearly, these are $\sigma$-algebras. We denote by $\bf g$
a standard Gaussian random vector in $\R^d$, and $\BP$ denotes probability. In the following form, the definition of conic support measures was suggested in \cite[(1.25)]{Ame15}.

\begin{definition}
The {\em conic support measures} $\Omega_0(C,\cdot),\dots,\Omega_d(C,\cdot)$ of the polyhedral cone $C$ are defined by
$$ \Omega_k(C,\eta) = \BP\{\Pi_C({\bf g})\in{\rm skel}_kC,\; (\Pi_C({\bf g}),\Pi_{C^\circ}({\bf g}))\in\eta\},\quad\eta\in \widehat\B(\Rd\times\Rd).$$
\end{definition}

It is clear that $ \Omega_k(C,\cdot)$ is a measure. Its total measure is the $k$th conic intrinsic volume, denoted by 
$v_k(C)=\Omega_k(C,\Rd\times\Rd)$.

First we deal with polyhedral cones, and for these we extend the `Master Steiner formula' of McCoy and Tropp \cite{MT14b} from conic intrinsic volumes to conic support measures. Let $f:\R_+^2\to\R_+$ be a measurable function. Let $\bE$ denote expectation and ${\mathbbm 1}_\eta$ the indicator function of $\eta$. For a cone $C\in\PC^d$ we define a measure $\varphi_f(C,\cdot)$ by
$$ \varphi_f(C,\eta):= \bE\left[ f(\|\Pi_C(\g)\|^2,\|\Pi_{C^\circ}(\g)\|^2)\cdot {\mathbbm 1}_\eta(\Pi_C(\g),\Pi_{C^\circ}(\g))\right]$$
for $\eta\in \widehat \B (\R^d\times\R^d)$. The following theorem expresses this measure as a linear combination of conic support measures, with coefficients depending on the function $f$.             
\begin{theorem}\label{T3.1}
Let $C\in\PC^d$ be a polyhedral cone. Let $f:\R_+^2\to\R_+$ be a measurable function such that $\varphi_f(C,\eta)$ is finite for all $\eta\in \widehat\B(\R^d\times\R^d)$. Then
\begin{equation}\label{3.1} 
\varphi_f(C,\eta) = \sum_{k=0}^d I_k(f)\cdot\Omega_k(C,\eta)
\end{equation}
for $\eta\in \widehat\B(\R^d\times\R^d)$, where the coefficients are given by
\begin{equation}\label{3.2} 
I_k(f)=\varphi_f(L_k,\R^d\times\R^d) 
\end{equation}
with an (arbitrary) $k$-dimensional subspace $L_k$ of $\Rd$. Explicitly,
$$ I_k(f) = \frac{\omega_k\omega_{d-k}}{\sqrt{2\pi}^{\hspace{1pt}d}} \int_0^\infty \int_0^\infty f(r^2,s^2)  e^{-\frac{1}{2}(r^2+s^2)} r^{k-1}s^{d-k-1}\,\D s \,\D r$$
for $k=1,\dots,d-1$ and
\begin{eqnarray*}
I_0(f) &=& \frac{\omega_d}{\sqrt{2\pi}^{\hspace{1pt}d}} \int_0^\infty f(0,s^2)  e^{-\frac{1}{2}s^2} s^{d-1}\,\D s,\\
I_d(f) &=& \frac{\omega_d}{\sqrt{2\pi}^{\hspace{1pt}d}} \int_0^\infty f(r^2,0)  e^{-\frac{1}{2}r^2} r^{d-1}\,\D r.
\end{eqnarray*}
\end{theorem}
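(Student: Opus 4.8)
The plan is to exploit the fact that, for a polyhedral cone, the metric projection is governed by the normal-cone decomposition of $\R^d$. For a face $F\in\F_k(C)$ let $N(C,F):=\{u\in C^\circ:\langle u,y\rangle=0\ \text{for all }y\in F\}$ be the normal cone of $C$ along $F$; then $N(C,F)\subseteq(\lin F)^\perp$, $\dim N(C,F)=d-k$, and the sets $U_F:=\{x:\Pi_C(x)\in\relint F\}=\relint F\oplus N(C,F)$ partition $\R^d$ as $F$ ranges over all faces of $C$. The first step is the pointwise reduction: if $x=y+z$ with $y\in\relint F$ and $z\in N(C,F)$, then $y\in C$, $z\in C^\circ$ and $\langle y,z\rangle=0$, so the uniqueness in the Moreau decomposition gives $\Pi_C(x)=y$ and $\Pi_{C^\circ}(x)=z$; since $y\in\lin F$ and $z\perp\lin F$, these are just the orthogonal projections of $x$ onto $L:=\lin F$ and onto $M:=L^\perp$. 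Writing $\g_L,\g_M$ for the corresponding orthogonal projections of the Gaussian vector $\g$, this lets me replace, on the cell $U_F$, the cone projections $\Pi_C(\g),\Pi_{C^\circ}(\g)$ by $\g_L,\g_M$.

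Next I would split the defining expectation over these cells, $\varphi_f(C,\eta)=\sum_{k=0}^d\sum_{F\in\F_k(C)}\varphi_f^F(\eta)$, where
$$ \varphi_f^F(\eta):=\bE\!\left[f(\|\g_L\|^2,\|\g_M\|^2)\,{\mathbbm 1}_\eta(\g_L,\g_M)\,{\mathbbm 1}\{\g_L\in\relint F,\ \g_M\in N(C,F)\}\right], $$
and use that $\g_L$ and $\g_M$ are independent standard Gaussian vectors on $L$ and $M$ (of dimensions $k$ and $d-k$). Introducing polar coordinates $\g_L=rp$, $\g_M=sq$ with $r,s\ge0$ and $p,q$ unit vectors in $L,M$, the Gaussian density factorizes into a radial part proportional to $e^{-\frac12(r^2+s^2)}r^{k-1}s^{d-k-1}\,\D r\,\D s$ and the uniform measures $\D\sigma(p)\,\D\sigma(q)$ on the unit spheres of $L$ and $M$. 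The decisive point is the separation of variables: $f(\|\g_L\|^2,\|\g_M\|^2)=f(r^2,s^2)$ depends only on the radii, whereas the conditions $p\in\relint F$ and $q\in N(C,F)$ (because $F$ and $N(C,F)$ are cones) together with ${\mathbbm 1}_\eta(\g_L,\g_M)={\mathbbm 1}_\eta(p,q)$ (because $\eta$ is biconic) depend only on the angular parts.

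Hence each summand factors as $\varphi_f^F(\eta)=\rho_k(f)\,\alpha_F(\eta)$, where the radial factor $\rho_k(f)$ (coming from the $r,s$-integral of $f(r^2,s^2)e^{-\frac12(r^2+s^2)}r^{k-1}s^{d-k-1}$) depends on $F$ only through $k=\dim F$ and not on $\eta$, while the angular factor $\alpha_F(\eta)$ (the integral of ${\mathbbm 1}_\eta(p,q)$ over $(\relint F\cap\Sd)\times(N(C,F)\cap\Sd)$) does not involve $f$. Applying this same factorization to the cone $C=L_k$, a $k$-dimensional subspace whose only face is $L_k$ itself, with $N(L_k,L_k)=L_k^\perp$ and a single cell equal to all of $\R^d$, gives $I_k(f)=\varphi_f(L_k,\R^d\times\R^d)=\rho_k(f)\,c_k$ with $c_k:=\alpha_{L_k}(\R^d\times\R^d)>0$; evaluating the resulting integrals in polar coordinates produces the explicit chi-square-type formulas for $I_k(f)$. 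Consequently $\rho_k(f)=I_k(f)/c_k$, and specializing to $f\equiv1$ (so that $\rho_k(1)=I_k(1)/c_k=1/c_k$, using $I_k(1)=1$) yields $\varphi_f^F(\eta)=I_k(f)\,\varphi_1^F(\eta)$. Summing over all faces and using ${\rm skel}_kC=\bigcup_{F\in\F_k(C)}\relint F$, so that $\sum_{F\in\F_k(C)}\varphi_1^F(\eta)=\Omega_k(C,\eta)$, gives $\varphi_f(C,\eta)=\sum_{k=0}^dI_k(f)\,\Omega_k(C,\eta)$, which is (\ref{3.1}).

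The step I expect to be the main obstacle is the first one: establishing the normal-cone decomposition $U_F=\relint F\oplus N(C,F)$ together with the orthogonality $N(C,F)\perp\lin F$. The one subtle verification is that $z=\Pi_{C^\circ}(x)$ is orthogonal to all of $F$, not merely to $\Pi_C(x)$; this follows because the functional $\langle z,\cdot\rangle$ is nonpositive on the cone $C\supseteq F$ and vanishes at the relative-interior point $\Pi_C(x)\in\relint F$, forcing it to vanish on $\lin F$. One must also treat the two extreme cells by the degenerate polar decomposition: $k=0$, where $L=\{o\}$, $\Pi_C(\g)=o$ and $U_F=C^\circ$, producing the $f(0,s^2)$ integral defining $I_0$; and $k=d$ for full-dimensional $C$, where $M=\{o\}$ and $\Pi_{C^\circ}(\g)=o$, producing the $f(r^2,0)$ integral defining $I_d$. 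Integrability causes no difficulty: since $f\ge0$, Tonelli's theorem legitimizes every interchange of summation and integration, and the hypothesis that $\varphi_f(C,\eta)$ is finite guarantees all the expressions are well defined.
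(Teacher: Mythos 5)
Your proposal is correct and takes essentially the same approach as the paper: the paper's ``proof'' is just a pointer to McCoy and Tropp's face-decomposition argument for the global case, modified by inserting the indicator ${\mathbbm 1}_\eta(\Pi_C(\g),\Pi_{C^\circ}(\g))$ into their Lemma 8.1 --- which is exactly the cell decomposition $U_F=\relint F\oplus N(C,F)$, Gaussian factorization, and radial--angular separation (using biconicity of $\eta$) that you spell out in detail. Your closing observation that Tonelli for $f\ge 0$ makes any continuity assumption on $f$ unnecessary also matches the paper's explicit remark to that effect.
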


The proof need not be carried out here, since it proceeds by an obvious modification of the proof given by McCoy and Tropp \cite{MT14b} for the global case (that is, for $\eta=\Rd\times\Rd$). Using the notation of \cite{MT14b}, the modification consists in replacing, in the proof of \cite[Lemma 8.1]{MT14b}, the function ${\mathbbm 1}_{{\rm relint}(F)}(\bf u)$ by ${\mathbbm 1}_{{\rm relint}(F)}(\bf u){\mathbbm 1}_\eta(\bf u,\bf w)$. The continuity of the function $f$ that is assumed in \cite[Lemma 8.1]{MT14b} is not needed at this stage.

By specialization, we obtain the local spherical Steiner formula, in a conic version. For this, let $C\in\PC^d_*$ and $\eta\in\widehat\B(\R^d\times\R^d)$. For $0\le \lambda<\pi/2$, we define the {\em angular local parallel set}
\begin{equation}\label{2.1.10b} 
M^a_\lambda(C,\eta):= \{x\in\R^d: d_a(x,C)\le \lambda,\; (\Pi_C(x),\Pi_{C^\circ}(x))\in\eta\}.
\end{equation}
This is a Borel set. By $\gamma_d$ we denote the standard Gaussian measure on $\Rd$.

\begin{theorem}\label{T3.2}
Let $C\in\PC^d_*$. The Gaussian measure of the angular local parallel set $M^a_\lambda(C,\eta)$, for $0\le \lambda<\pi/2$, is given by
\begin{equation}\label{2.1.13a}
\gamma_d(M^a_\lambda(C,\eta)) = \sum_{k=1}^d g_k(\lambda)\cdot\Omega_k(C,\eta),
\end{equation}
where
\begin{equation}\label{2.1.13b} 
g_k(\lambda) = \frac{\omega_k\omega_{d-k}}{\omega_d} \int_0^\lambda \cos^{k-1}\varphi \sin^{d-k-1}\varphi\,\D\varphi
\end{equation}
for $k=1,\dots,d-1$ and $g_d\equiv 1$.
\end{theorem}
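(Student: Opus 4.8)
The plan is to derive Theorem \ref{T3.2} directly from Theorem \ref{T3.1} by specializing $f$ to an appropriate indicator function. First I would observe that, since $\g$ is a standard Gaussian vector, $\gamma_d(M^a_\lambda(C,\eta)) = \BP\{\g\in M^a_\lambda(C,\eta)\}$ is precisely an expectation of the type defining $\varphi_f(C,\eta)$. Writing $r:=\|\Pi_C(x)\|$ and $s:=\|\Pi_{C^\circ}(x)\|$, the Moreau decomposition gives $\|x\|^2=r^2+s^2$, hence $\cos d_a(x,C)=r/\sqrt{r^2+s^2}$ for $x\neq o$. Since $0\le\lambda<\pi/2$, the condition $d_a(x,C)\le\lambda$ is therefore equivalent to $r^2\sin^2\lambda\ge s^2\cos^2\lambda$. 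I would accordingly define the bounded measurable function $f:\R_+^2\to\R_+$ by $f(a,b):=\mathbbm{1}\{a\sin^2\lambda\ge b\cos^2\lambda\}$, so that $f(\|\Pi_C(x)\|^2,\|\Pi_{C^\circ}(x)\|^2)=\mathbbm{1}\{d_a(x,C)\le\lambda\}$ for $x\neq o$ (the single point $x=o$ being Gaussian-null). With this choice, $\varphi_f(C,\eta)=\gamma_d(M^a_\lambda(C,\eta))$.

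Because $f\le 1$, we have $\varphi_f(C,\eta)\le 1<\infty$, so the hypothesis of Theorem \ref{T3.1} is satisfied, and \eqref{3.1} yields $\gamma_d(M^a_\lambda(C,\eta))=\sum_{k=0}^d I_k(f)\,\Omega_k(C,\eta)$. It then remains to evaluate the coefficients $I_k(f)$ from the explicit formulas in Theorem \ref{T3.1} and to identify them with the $g_k(\lambda)$ of \eqref{2.1.13b}. For $1\le k\le d-1$ I would pass to polar coordinates $r=\rho\cos\varphi$, $s=\rho\sin\varphi$ with $\rho\ge 0$ and $\varphi\in[0,\pi/2]$ in the double integral for $I_k(f)$; then $\cos d_a=\cos\varphi$, so the indicator $f$ becomes $\mathbbm{1}\{\varphi\le\lambda\}$, while $e^{-(r^2+s^2)/2}r^{k-1}s^{d-k-1}\,\D s\,\D r=e^{-\rho^2/2}\rho^{d-1}\cos^{k-1}\varphi\,\sin^{d-k-1}\varphi\,\D\rho\,\D\varphi$. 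The integral thus factors as a radial Gaussian integral times the angular integral $\int_0^\lambda\cos^{k-1}\varphi\,\sin^{d-k-1}\varphi\,\D\varphi$.

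The one computation requiring care is the normalizing constant: I would verify that $\frac{1}{\sqrt{2\pi}^{\hspace{1pt}d}}\int_0^\infty e^{-\rho^2/2}\rho^{d-1}\,\D\rho=1/\omega_d$, which follows from $\int_0^\infty e^{-\rho^2/2}\rho^{d-1}\,\D\rho=2^{d/2-1}\Gamma(d/2)$ together with $\omega_d=2\pi^{d/2}/\Gamma(d/2)$. This turns the prefactor $\omega_k\omega_{d-k}/\sqrt{2\pi}^{\hspace{1pt}d}$ into $\omega_k\omega_{d-k}/\omega_d$, giving exactly $I_k(f)=g_k(\lambda)$. Finally I would dispatch the boundary terms directly: in $I_0(f)$ the integrand contains $f(0,s^2)=\mathbbm{1}\{0\ge s^2\cos^2\lambda\}=0$ (since $\cos\lambda>0$), so $I_0(f)=0$, which is why the sum in \eqref{2.1.13a} starts at $k=1$; whereas in $I_d(f)$ one has $f(r^2,0)=1$, hence $I_d(f)=\frac{\omega_d}{\sqrt{2\pi}^{\hspace{1pt}d}}\int_0^\infty e^{-r^2/2}r^{d-1}\,\D r=1=g_d$. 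Once the correct $f$ is identified, the argument is essentially bookkeeping; I expect no genuine obstacle beyond keeping the Gaussian radial normalization straight.
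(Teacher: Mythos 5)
Your proof is correct and follows essentially the same route as the paper: Theorem \ref{T3.2} is obtained from Theorem \ref{T3.1} by specializing $f$ to the indicator of the cone-distance condition and evaluating the coefficients $I_k(f)$, which your polar-coordinate computation (with the radial Gaussian normalization $\frac{1}{\sqrt{2\pi}^{\hspace{1pt}d}}\int_0^\infty e^{-\rho^2/2}\rho^{d-1}\,\D\rho = 1/\omega_d$) carries out explicitly where the paper leaves it to the reader. One remark: your indicator $f(a,b)={\mathbbm 1}\{a\sin^2\lambda\ge b\cos^2\lambda\}$, i.e. $b\le a\tan^2\lambda$, is in fact the correct choice, whereas the paper's displayed condition ``$a\le b\tan^2\lambda$'' has the two arguments inadvertently swapped (a typo, since $a=\|\Pi_C(x)\|^2$ and $b=\|\Pi_{C^\circ}(x)\|^2$ give $d_a(x,C)\le\lambda\iff b\le a\tan^2\lambda$), and your version also correctly yields $I_0(f)=0$ and $I_d(f)=1$, consistent with the sum in \eqref{2.1.13a} starting at $k=1$.
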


Theorem \ref{T3.2} follows from Theorem \ref{T3.1} by choosing the function $f$ defined by
$$ f(a,b) := \left\{ \begin{array}{ll}1 & \mbox{if } a\le b\tan^2\lambda,\\ 0 &\mbox{otherwise},\end{array}\right.$$
for which
$$ \varphi_f(C,\eta)= \bP\{d_a(\g,C)\le\lambda,\; (\Pi_C(x),\Pi_{C^\circ}(x))\in\eta\} = \gamma_d(M^a_\lambda(C,\eta)).$$
Theorem \ref{T3.2} is equivalent to the local spherical Steiner formula, proved by Glasauer \cite[Satz 3.1.1]{Gla95}.

We need weak convergence to extend the conic support measures to general convex cones. For a function $f:\Sd\times\Sd\to\R$, we define its homogeneous extension by 
$$ f_h(x,y) := f\left(\frac{x}{\|x\|},\frac{y}{\|y\|}\right) \quad\mbox{for } x,y \in\R^d\setminus \{o\}.$$
For finite measures $\mu,\mu_i$ on $\widehat\B(\R^d\times\R^d)$, the weak convergence $\mu_i\xrightarrow{w} \mu$ is defined by each of the following equivalent conditions (for the equivalence see, e.g., \cite[p. 176]{Ash72}):\\[1mm]
$\rm (a)$ For all continuous functions $f:\Sd\times\Sd\to\R$,
$$ \int_{\R^d\times\R^d} f_h(x,y)\,\mu_i(d(x,y)) \to \int_{\R^d\times\R^d} f_h(x,y)\,\mu(d(x,y))$$
as $i\to\infty$;\\[1mm]
$\rm (b)$ For every open set $\eta\subset\R^d\times\R^d$,
$$ \mu(\eta) \le \liminf_{i\to\infty} \mu_i(\eta),$$
and
$$ \mu(\R^d\times\R^d)= \lim_{i\to\infty} \mu_i(\R^d\times\R^d).$$

If we now define
$$ \mu_\lambda(C,\eta) = \gamma_d(M^a_\lambda(C,\eta)),\quad \eta\in \widehat\B(\Rd\times\Rd)$$
for given $C\in\C^d$ and $\lambda\ge 0$, then $\mu_\lambda(C,\cdot)$ is a finite measure on $\B(\Rd\times\Rd)$, and $\mu_\lambda$ depends weakly continuously on $C$. The counterpart to this fact for Euclidean convex bodies is \cite[Thm. 4.1.1]{Sch14}, and Glasauer \cite[Hilfssatz 3.1.3]{Gla95} has carried this over to spherical space. His argument can easily be adapted to the conic setting. Convergence of cones refers to the angular Hausdorff metric. 

\begin{lemma}\label{L3.2}
Let $0\le\lambda<\pi/2$. Let $C,C_i\in \C^d_*$ ($i\in\N$) be cones with $C_i\to C$ as $i\to\infty$. Then $\mu_\lambda(C_i,\cdot) \xrightarrow{w} \mu_\lambda(C,\cdot)$.
\end{lemma}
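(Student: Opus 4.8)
The plan is to verify weak convergence $\mu_\lambda(C_i,\cdot)\xrightarrow{w}\mu_\lambda(C,\cdot)$ using characterization (b) from the excerpt, namely the portmanteau conditions for open sets together with convergence of total masses. The total mass of $\mu_\lambda(C,\cdot)$ is $\gamma_d(M^a_\lambda(C,\R^d\times\R^d))=\gamma_d(C^a_\lambda)$, so I would first check that $\gamma_d((C_i)^a_\lambda)\to\gamma_d(C^a_\lambda)$. Since $C_i\to C$ in the angular Hausdorff metric, for every $\varepsilon>0$ eventually $C_i\subseteq C^a_{\lambda+\varepsilon}$ and $C\subseteq (C_i)^a_{\lambda+\varepsilon}$, which sandwiches the parallel sets up to the $\gamma_d$-measure of a thin angular shell; letting $\varepsilon\downarrow0$ and using that $\gamma_d$ assigns measure zero to the boundary $\{x:d_a(x,C)=\lambda\}$ (because $\lambda<\pi/2$ and $\gamma_d$ is absolutely continuous) gives the mass convergence.

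The heart of the matter is the open-set inequality $\mu_\lambda(C,\eta)\le\liminf_i\mu_\lambda(C_i,\eta)$ for open $\eta\subset\R^d\times\R^d$. Here I would work pointwise in the Gaussian integral. Fix $x\in\R^d$ lying in the open local parallel set for the limit cone, i.e. $d_a(x,C)<\lambda$ (I will handle the boundary separately) and $(\Pi_C(x),\Pi_{C^\circ}(x))\in\eta$. The key convergence tool is Lemma~\ref{L2.1a}, which gives $\|\Pi_{C_i}(x)-\Pi_C(x)\|\le\|x\|\sqrt{10\,\delta_a(C_i,C)}\to0$, and the same for the dual cones via Lemma~\ref{L2.1} (valid since eventually $\delta_a(C_i,C)<\pi/2$). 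Thus $\Pi_{C_i}(x)\to\Pi_C(x)$ and $\Pi_{(C_i)^\circ}(x)\to\Pi_{C^\circ}(x)$. Because $\eta$ is open and these pairs converge, for large $i$ the pair $(\Pi_{C_i}(x),\Pi_{(C_i)^\circ}(x))$ again lies in $\eta$; and since $d_a(\cdot,C_i)$ converges to $d_a(\cdot,C)$ (a consequence of the projection convergence together with the definition of $d_a$), the condition $d_a(x,C_i)<\lambda$ also holds eventually. Hence the indicator of the open set $M^a_\lambda(C,\eta)^\circ$ is bounded below by $\liminf_i$ of the indicators of $M^a_\lambda(C_i,\eta)$ at $\gamma_d$-almost every such $x$, and Fatou's lemma applied to the Gaussian integral yields the desired inequality.

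The main obstacle I anticipate is the careful treatment of the boundary set $B:=\{x:d_a(x,C)=\lambda\}$ and of points where $\Pi_{C^\circ}(x)=o$ or $\Pi_C(x)=o$ (that is, $x\in C$ or $x\in C^\circ$), where the homogeneous normalization $x\mapsto(\Pi_C(x)/\|\Pi_C(x)\|,\dots)$ degenerates and the openness argument for $\eta$ must be reexamined. The clean way around this is to observe that each such exceptional set has $\gamma_d$-measure zero: $\gamma_d(B)=0$ as noted above, and $\{x:\Pi_{C^\circ}(x)=o\}=C$ has positive measure but there $d_a(x,C)=0<\lambda$ and the relevant membership in $\eta$ is governed by the conic $\sigma$-algebra structure, so these contributions match in the limit. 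I would therefore restrict the Fatou argument to the full-measure set where $o\ne\Pi_C(x)$, $o\ne\Pi_{C^\circ}(x)$, and $d_a(x,C)\ne\lambda$, which is legitimate precisely because $\lambda<\pi/2$ keeps the angular shell Gaussian-null. With the open-set inequality and the total-mass convergence both established, condition (b) is satisfied and the weak convergence follows.
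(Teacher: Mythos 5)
Your overall strategy---verifying portmanteau condition (b) via convergence of the metric projections (Lemma \ref{L2.1a}), Fatou's lemma, and nullity of the level set $\{x:d_a(x,C)=\lambda\}$---is sound, and it is essentially the argument of \cite[Thm.~4.1.1]{Sch14} and Glasauer \cite{Gla95} that the paper invokes without reproducing. However, two steps fail as written. First, the lemma is stated for $0\le\lambda<\pi/2$, and at $\lambda=0$ your argument collapses at every point where it relies on the boundary being null: the set $\{x:d_a(x,C)=0\}$ is $C$ itself, which has positive Gaussian measure whenever $\dim C=d$; the inner member of your sandwich, $C^a_{\lambda-\varepsilon}$, does not exist; and the pointwise Fatou argument breaks because a point $x\in C\setminus C_i$ satisfies $d_a(x,C)=0$ but need not belong to $C_i$, so the indicator inequality is not available from projection convergence alone. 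The case $\lambda=0$ needs a separate input---for instance, that interior points of $C$ lie in $C_i$ for large $i$ (a convexity argument), or continuity of the volume of the convex bodies $C_i\cap B$ ($B$ the unit ball), or the uniform shell estimate $\gamma_d(D^a_\delta)-\gamma_d(D)\le C(d)\delta$ which the paper itself extracts from the Steiner formula in the proof of Theorem \ref{T4.1}; any of these gives $\gamma_d(C\setminus C_i)\to 0$, which is what is actually needed there.

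Second, your concluding restriction of the Fatou argument to the set where $\Pi_{C^\circ}(x)\neq o$ is not legitimate: by the Moreau decomposition, $\Pi_{C^\circ}(x)=o$ exactly when $x\in C$, so you are excluding $C$, which is not a null set when $C$ is full-dimensional, and points $x\in C$ with $(x,o)\in\eta$ genuinely contribute to $\mu_\lambda(C,\eta)$. As written, your argument only yields $\gamma_d(M^a_\lambda(C,\eta)\setminus C)\le\liminf_i\mu_\lambda(C_i,\eta)$, which is strictly weaker than condition (b). The repair is that no such restriction is needed: condition (b) involves sets $\eta$ open in $\R^d\times\R^d$, not in $\Sd\times\Sd$, so no spherical normalization ever enters; the convergence $(\Pi_{C_i}(x),\Pi_{C_i^\circ}(x))\to(x,o)\in\eta$ suffices, and the dual projections converge automatically via $\Pi_{C_i^\circ}(x)=x-\Pi_{C_i}(x)$ (which also spares you Lemma \ref{L2.1} and its exclusion of the cone $\R^d$). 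Finally, a smaller point: for $0<\lambda<\pi/2$ the nullity of $\{x:d_a(x,C)=\lambda\}$ is true but does not follow from absolute continuity of $\gamma_d$ alone (closed sets can have positive Lebesgue measure); one needs the standard density-point argument for level sets of a distance function at a positive level, or continuity in $\lambda$ of the parallel volume.
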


With the aid of this lemma, the following theorem can be proved. It is deliberately formulated close to \cite[Thm. 4.2.1]{Sch14}, to show the analogy. 

\begin{theorem}\label{T3.3}
To every cone $C\in\C^d_*$ there exist finite positive measures $\Omega_0(C,\cdot),\dots,\Omega_d(C,\cdot)$ on the $\sigma$-algebra $\widehat\B(\R^d\times\R^d)$ such that, for every $\eta\in\widehat\B(\R^d\times\R^d)$ and every $\lambda$ with $0\le \lambda<\pi/2$, the Gaussian measure of the angular local parallel set $M^a_\lambda(C,\eta)$ is given by
\begin{equation}\label{2.2.4a} 
\mu_\lambda(C,\eta) = \sum_{k=1}^d g_k(\lambda)\cdot\Omega_k(C,\eta)
\end{equation}
with
\begin{equation}\label{2.2.5} 
g_k(\lambda) = \frac{\omega_k\omega_{d-k}}{\omega_d} \int_0^\lambda \cos^{k-1}\varphi \sin^{d-k-1}\varphi\,\D\varphi
\end{equation}
for $k=1,\dots,d-1$ and $g_d\equiv 1$.

The mapping $C\mapsto \Omega_k(C,\cdot)$ (from $\C^d_*$ into the space of finite measures on $\widehat\B(\R^d\times\R^d)$) is a weakly continuous valuation. 

For each $\eta\in  \widehat\B(\R^d\times\R^d)$, the function $\Omega_k(\cdot,\eta)$ (from $\C^d_*$ to $\R$) is measurable.
\end{theorem}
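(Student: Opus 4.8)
The plan is to read the measures $\Omega_1(C,\cdot),\dots,\Omega_d(C,\cdot)$ off the local Steiner formula (\ref{2.2.4a}) by linear inversion. First I would verify that the $d$ functions $g_1,\dots,g_d$ are linearly independent on $[0,\pi/2)$: evaluating at $\lambda=0$ gives $g_d=1$ and $g_k=0$ for $k<d$, so any vanishing combination has zero $g_d$-coefficient, and differentiating reduces the rest to the linear independence of the functions $\varphi\mapsto\cos^{k-1}\varphi\,\sin^{d-k-1}\varphi$. Hence there are values $0\le\lambda_1<\dots<\lambda_d<\pi/2$ for which the matrix $A=(g_k(\lambda_j))_{j,k=1}^d$ is invertible; write $B=A^{-1}=(b_{kj})$. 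For arbitrary $C\in\C^d_*$ and $\eta\in\widehat\B(\R^d\times\R^d)$ I then set $\Omega_k(C,\eta):=\sum_{j=1}^d b_{kj}\,\mu_{\lambda_j}(C,\eta)$, a finite signed measure in $\eta$. By Theorem \ref{T3.2} and this inversion, for polyhedral $C$ the definition reproduces the measure of the original definition, so it agrees with the already-defined conic support measures on $\PC^d_*$. Moreover, once (\ref{2.2.4a}) is established for all $\lambda$, linear independence of the $g_k$ shows these are the \emph{only} measures satisfying it, so the construction does not depend on the chosen $\lambda_j$.

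To obtain positivity and the validity of (\ref{2.2.4a}) for every $\lambda$, I would use that polyhedral cones are dense in $(\C^d_*,\delta_a)$: choose $C_i\in\PC^d_*$ with $C_i\to C$. By Lemma \ref{L3.2}, $\mu_{\lambda_j}(C_i,\cdot)\xrightarrow{w}\mu_{\lambda_j}(C,\cdot)$, and since the $\Omega_k$ are fixed linear combinations, $\int f_h\,\D\Omega_k(C_i,\cdot)\to\int f_h\,\D\Omega_k(C,\cdot)$ for all continuous $f$. As each $\Omega_k(C_i,\cdot)$ is positive, the limit of $\int f_h\,\D\Omega_k(C_i,\cdot)$ is $\ge0$ for $f\ge0$; since a biconic measure is determined by, and nonnegative iff nonnegative on, all such integrals, $\Omega_k(C,\cdot)$ is a positive measure. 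Writing the polyhedral identity (\ref{2.2.4a}) for $C_i$ in the test-function form of condition (a) and letting $i\to\infty$ gives $\int f_h\,\D\mu_\lambda(C,\cdot)=\sum_k g_k(\lambda)\int f_h\,\D\Omega_k(C,\cdot)$ for all continuous $f$ and all $\lambda$, whence (\ref{2.2.4a}) as a measure identity. Finally, $\Omega_0$ does not appear in (\ref{2.2.4a}); I would define it directly by the sequence-independent formula $\Omega_0(C,\eta)=\gamma_d(\{y\in C^\circ:(o,y)\in\eta\})$, which reproduces the original definition for polyhedral $C$ and equals $\Omega_d(C^\circ,\eta^\vee)$ (with $\eta^\vee$ the image of $\eta$ under coordinate exchange), so that all its properties descend from those of $\Omega_d$ via the isometry in Lemma \ref{L2.1}.

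For the valuation property it suffices, again by the linear independence of the $g_k$, to show that each map $C\mapsto\mu_\lambda(C,\eta)=\gamma_d(M^a_\lambda(C,\eta))$ is additive on $\C^d_*$. The crux is the pointwise inclusion--exclusion
\[
\mathbbm{1}_{M^a_\lambda(C\cup D,\eta)}+\mathbbm{1}_{M^a_\lambda(C\cap D,\eta)}=\mathbbm{1}_{M^a_\lambda(C,\eta)}+\mathbbm{1}_{M^a_\lambda(D,\eta)},
\]
valid for $\gamma_d$-almost all $x$ whenever $C,D,C\cup D\in\C^d_*$; integrating against $\gamma_d$ then yields additivity of $\mu_\lambda(\cdot,\eta)$, hence of each $\Omega_k(\cdot,\eta)$, the case of $\Omega_0$ following by polarity. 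This identity is the conic analogue of the additivity of Euclidean local parallel sets; its verification---resting on the fact that, for $x$ with $C\cup D$ convex, the metric projection $\Pi_{C\cup D}(x)$ together with its associated support element coincides with that of whichever of $C,D$ realizes the angular distance, while $\Pi_{C\cap D}(x)$ furnishes the complementary term---is the main obstacle and the step I expect to require the most care, closely paralleling the Euclidean argument.

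Weak continuity of $C\mapsto\Omega_k(C,\cdot)$ is then immediate, inherited from that of the $\mu_{\lambda_j}$ (Lemma \ref{L3.2}) through the linear definition. For measurability of $\Omega_k(\cdot,\eta)$ with $\eta$ fixed I would argue as in the Euclidean theory: for open biconic $\eta$ the portmanteau inequality $\liminf_i\Omega_k(C_i,\eta)\ge\Omega_k(C,\eta)$ coming from weak continuity shows $\Omega_k(\cdot,\eta)$ is lower semicontinuous, hence Borel; the open biconic sets are $\cap$-stable and generate $\widehat\B(\R^d\times\R^d)$, and the family of $\eta$ for which $\Omega_k(\cdot,\eta)$ is Borel is a Dynkin system (using that $\Omega_k(C,\cdot)$ is a finite measure), so it exhausts $\widehat\B(\R^d\times\R^d)$.
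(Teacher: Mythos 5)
Your proposal is correct and takes essentially the same route as the paper, whose entire proof is the remark that one need only transcribe Glasauer's spherical arguments into the conic setting: those arguments are precisely the steps you reconstruct -- inversion of the local Steiner formula via linear independence of $g_1,\dots,g_d$, polyhedral approximation combined with the weak continuity of $\mu_\lambda$ from Lemma \ref{L3.2}, the projection inclusion--exclusion identity for the valuation property, and lower semicontinuity on open biconic sets plus a Dynkin-system argument for measurability. The one step you single out as delicate does go through, in fact for \emph{every} $x$ rather than almost every $x$: when $C\cup D$ is convex, the pair $\bigl(\Pi_{C\cup D}(x),\Pi_{C\cap D}(x)\bigr)$ is a permutation of $\bigl(\Pi_C(x),\Pi_D(x)\bigr)$, and the associated dual projections and angular distances are then determined by the Moreau decomposition, which yields your indicator identity pointwise.
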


The proof just requires to reformulate Glasauer's \cite{Gla95} arguments in the conical setting.

\vspace{3mm}

Our aim is to extend Theorem \ref{T3.1} to general convex cones. For this, we need the following lemma.
\begin{lemma}\label{L3.3}
Let $f:\R_+^2\to\R_+$ be continuous and bounded. Let $C,C_i\in\C_*^d$ ($i\in\N$) be cones with $C_i\to C$ as $i\to \infty$. Then $\varphi_f(C_i,\cdot) \xrightarrow{w} \varphi_f(C,\cdot)$.
\end{lemma}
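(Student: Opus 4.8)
The plan is to verify condition (a) of weak convergence. Fix a continuous function $F\colon\Sd\times\Sd\to\R$, set $M_F:=\sup|F|$ and $M_f:=\sup f<\infty$. Since the homogeneous extension $F_h$ is defined only where both arguments are nonzero, the integral of $F_h$ against $\varphi_f(C,\cdot)$ is, by the defining Gaussian expectation, the integral over the set
$$ G(C):=\Rd\setminus(C\cup C^\circ)=\{x\in\Rd:\Pi_C(x)\ne o\ne\Pi_{C^\circ}(x)\}. $$
Writing $\psi_C(x):=F_h(\Pi_C(x),\Pi_{C^\circ}(x))\,f(\|\Pi_C(x)\|^2,\|\Pi_{C^\circ}(x)\|^2)$, it therefore suffices to prove
$$ \int_{G(C_i)}\psi_{C_i}\,\D\gamma_d\longrightarrow\int_{G(C)}\psi_{C}\,\D\gamma_d. $$

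The basic input is pointwise convergence of the projections. By Lemma \ref{L2.1a}, $\|\Pi_{C_i}(x)-\Pi_C(x)\|\le\|x\|\sqrt{10\,\delta_a(C_i,C)}\to 0$ for each fixed $x$, and the Moreau decomposition $\Pi_{C_i^\circ}(x)=x-\Pi_{C_i}(x)$ then furnishes $\Pi_{C_i^\circ}(x)\to\Pi_{C^\circ}(x)$ as well. Hence, for every $x\in G(C)$ both limit projections are nonzero, so $\|\Pi_{C_i}(x)\|$ and $\|\Pi_{C_i^\circ}(x)\|$ stay bounded away from $0$ for large $i$; thus $x\in G(C_i)$ eventually, and continuity of $F$ and $f$ gives $\psi_{C_i}(x)\to\psi_{C}(x)$. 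Since $|\psi_{C_i}|\le M_FM_f$ and $\gamma_d$ is a probability measure, dominated convergence yields $\int_{G(C)\cap G(C_i)}\psi_{C_i}\,\D\gamma_d\to\int_{G(C)}\psi_{C}\,\D\gamma_d$.

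The main obstacle is to show that the integrals over the symmetric difference $G(C_i)\triangle G(C)$ are negligible; on $\inn C$ and $\inn C^\circ$ the extension $F_h$ genuinely fails to converge, because the direction of a vanishing projection is undefined. The remedy is a measure estimate for angular shells. Put $\delta_i:=\delta_a(C_i,C)$, and (for $C\ne\Rd$, hence eventually $\delta_i<\pi/2$) let $\delta_i':=\delta_a(C_i^\circ,C^\circ)\to0$ by Lemma \ref{L2.1}. Using $C\subseteq C_i^a{}_{\delta_i}$, $C_i\subseteq C^a_{\delta_i}$ and the polar analogues, each of the four pieces of $G(C_i)\triangle G(C)$ is contained in a shell $E^a_\delta\setminus E$ with $E\in\{C,C_i,C^\circ,C_i^\circ\}$ and $\delta\in\{\delta_i,\delta_i'\}$; for instance $G(C)\cap C_i\subseteq C_i\setminus C\subseteq C^a_{\delta_i}\setminus C$ and $G(C_i)\cap C\subseteq C\setminus C_i\subseteq C_i^a{}_{\delta_i}\setminus C_i$. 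By Theorem \ref{T3.3} (with $\eta=\Rd\times\Rd$), together with $\gamma_d(E)=v_d(E)$, $g_d\equiv1$, $v_k\ge0$ and $\sum_k v_k(E)=1$,
$$ \gamma_d(E^a_\delta\setminus E)=\sum_{k=1}^{d-1}g_k(\delta)\,v_k(E)\le\sum_{k=1}^{d-1}g_k(\delta)\xrightarrow{\ \delta\to0\ }0, $$
since each $g_k$ is continuous with $g_k(0)=0$. Thus $\gamma_d\big(G(C_i)\triangle G(C)\big)\to0$, and as both integrands are bounded by $M_FM_f$ the contributions of these sets vanish, completing the verification of (a).

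Finally, I would dispose of the degenerate case $C=\Rd$ (where $C^\circ=\{o\}\notin\C^d_*$ and Lemma \ref{L2.1} does not apply) directly: here $G(C)=\emptyset$ and $\Pi_{C^\circ}\equiv o$, so the assertion reduces to $\gamma_d(G(C_i))\to0$, which follows from the same shell estimate together with $v_d(C_i)\to1$.
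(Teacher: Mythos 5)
Your route differs from the paper's: you verify condition (a) of weak convergence (integrals of homogeneous extensions of continuous test functions), whereas the paper verifies condition (b). The paper's proof is very short: for every $x$, Lemma \ref{L2.1a} and the Moreau decomposition give $\Pi_{C_i}(x)\to\Pi_C(x)$ and $\Pi_{C_i^\circ}(x)\to\Pi_{C^\circ}(x)$; hence for open $\eta$ one has ${\mathbbm 1}_\eta(\Pi_C(x),\Pi_{C^\circ}(x))\le {\mathbbm 1}_\eta(\Pi_{C_i}(x),\Pi_{C_i^\circ}(x))$ for almost all $i$, Fatou's lemma yields $\varphi_f(C,\eta)\le\liminf_i\varphi_f(C_i,\eta)$, and dominated convergence yields convergence of the total masses. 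Indicator functions of open sets are defined everywhere, so the points where one projection vanishes cause no difficulty at all; no shell estimates and no appeal to Theorem \ref{T3.3} are needed.

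Your proof, by contrast, has a genuine gap exactly at those points. By restricting every integral to $G(C)=\Rd\setminus(C\cup C^\circ)$, i.e.\ integrating $F_h$ only where it is defined, you discard precisely the mass that $\varphi_f(C_i,\cdot)$ carries on the degenerate biconic sets: for instance $\varphi_f(C,\Rd\times\{o\})=\int_C f(\|x\|^2,0)\,\gamma_d(\D x)$, which is positive in general and is where the $k=d$ (and, on $\{o\}\times\Rd$, the $k=0$) contributions live. Your argument never controls this mass; in particular it does not prove $\varphi_f(C_i,\Rd\times\Rd)\to\varphi_f(C,\Rd\times\Rd)$, which is an explicit half of condition (b). And under your convention, (a) is strictly weaker than (b): two measures that differ only on $\Rd\times\{o\}$ --- say the ``point masses'' $\eta\mapsto{\mathbbm 1}[(e_1,o)\in\eta]$ and $\eta\mapsto{\mathbbm 1}[(e_2,o)\in\eta]$ for distinct unit vectors $e_1,e_2$ --- have identical integrals $\int F_h$ for every $F$, yet the open biconic set $U\times\Rd$, with $U$ an open cone around $e_2$ avoiding $e_1$, violates the liminf inequality of (b). So what you prove does not yield weak convergence in the sense the paper needs: Theorem \ref{T3.4} uses the lemma to identify measures on \emph{all} biconic Borel sets, including subsets of $C\times\{o\}$ where $\Omega_d$ is concentrated. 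The components you do prove are correct (pointwise convergence of projections, the containments of $G(C_i)\,\triangle\,G(C)$ in angular shells, and the bound $\gamma_d(E^a_\delta\setminus E)\le\sum_{k=1}^{d-1}g_k(\delta)$ via Theorem \ref{T3.3}), and indeed your shell estimate gives $\gamma_d(C_i\,\triangle\,C)\to 0$ and $\gamma_d(C_i^\circ\,\triangle\,C^\circ)\to 0$, which together with dominated convergence would supply the missing control of the degenerate parts and of the total masses --- but that step is absent, and it is not optional.
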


\begin{proof}
For $x\in\R^d$ we have
$$ \Pi_{C_i}(x) \to \Pi_{C}(x),\qquad \Pi_{C^\circ_i}(x) \to \Pi_{C^\circ}(x) \qquad (i\to\infty),$$
which follows from Lemma \ref{L2.1a}. This implies
\begin{equation}\label{2.2.13a}  
\lim_{i\to\infty} f(\|\Pi_{C_i}(x)\|^2,\|\Pi_{C_i^\circ}(x)\|^2) = f(\|\Pi_C(x)\|^2,\|\Pi_{C^\circ}(x)\|^2).
\end{equation}
Let $\eta\subset\R^d\times\R^d$ be open. If $(\Pi_{C}(x),\Pi_{C^\circ}(x))\in \eta$, we have $(\Pi_{C_i}(x),\Pi_{C_i^\circ}(x))\in \eta$ for almost all $i$. Thus,
$$ {\mathbbm 1}_\eta(\Pi_C(x),\Pi_{C^\circ}(x))\le {\mathbbm 1}_\eta(\Pi_{C_i}(x),\Pi_{C_i^\circ}(x))$$
for almost all $i$. We deduce that
\begin{eqnarray*}
&& f(\|\Pi_{C}(x)\|^2,\|\Pi_{C^\circ}(x)\|^2) {\mathbbm 1}_\eta(\Pi_{C}(x),\Pi_{C^\circ}(x))\\
&& \le \liminf_{i\to\infty}
f(\|\Pi_{C_i}(x)\|^2,\|\Pi_{C_i^\circ}(x)\|^2) {\mathbbm 1}_\eta(\Pi_{C_i}(x),\Pi_{C_i^\circ}(x)).
\end{eqnarray*}
Fatou's lemma shows that
$$ \varphi_f(C,\eta) \le  \liminf_{i\to\infty} \varphi_f(C_i,\eta).$$
Further, (\ref{2.2.13a}) together with the dominated convergence theorem gives
$$ \lim_{i\to\infty} \varphi_f(C_i,\R^d\times\R^d)= \varphi_f(C,\R^d\times\R^d).$$
Both relations together yield the assertion. 
\end{proof}

We can now extend Theorem \ref{T3.1} to general convex cones.

\begin{theorem}\label{T3.4}
Let $C\in\C^d$ be a convex cone. Let $f:\R_+^2\to\R_+$ be a measurable function such that 
$$ \varphi_f(C,\eta):= \bE\left[ f(\|\Pi_C(\g)\|^2,\|\Pi_{C^\circ}(\g)\|^2)\cdot {\mathbbm 1}_\eta(\Pi_C(\g),\Pi_{C^\circ}(\g))\right]$$
is finite for all $\eta\in \widehat\B(\R^d\times\R^d)$. Then
\begin{equation}\label{3.10} 
\varphi_f(C,\eta) = \sum_{k=0}^d I_k(f)\cdot\Omega_k(C,\eta)
\end{equation}
for $\eta\in \widehat\B(\R^d\times\R^d)$, where the coefficients are as in Theorem $\rm \ref{T3.1}$.
\end{theorem}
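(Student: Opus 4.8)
The plan is to deduce the general case from the polyhedral case (Theorem~\ref{T3.1}) by approximation, and to handle the integrand $f$ in two stages: first for continuous bounded $f$, where the weak-convergence machinery applies, and then for arbitrary measurable $f$ by a measure-theoretic argument on $\R_+^2$. If $C$ is polyhedral, then $C\in\PC^d$ and \eqref{3.10} is nothing but \eqref{3.1}. Since a cone of dimension $0$ or $1$ is polyhedral, every non-polyhedral cone lies in $\C^d_*$, so it remains to treat such a $C$. Fix polyhedral cones $C_i\in\PC^d_*$ with $C_i\to C$ in the angular Hausdorff metric; these exist because the spherically convex body $C\cap\Sd$ can be approximated by inscribed spherical polytopes, whose spanned cones converge to $C$ with respect to $\delta_a$.

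First I would prove \eqref{3.10} for every continuous and bounded $f\colon\R_+^2\to\R_+$. For such $f$ all quantities are finite: $\varphi_f(C_i,\cdot)$ and $\varphi_f(C,\cdot)$ have total mass at most $\|f\|_\infty$, and each $I_k(f)$ is finite because the integrands in Theorem~\ref{T3.1} carry a Gaussian factor. Theorem~\ref{T3.1}, applied to each $C_i$, gives the identity of finite measures
\[ \varphi_f(C_i,\cdot)=\sum_{k=0}^d I_k(f)\,\Omega_k(C_i,\cdot). \]
By Lemma~\ref{L3.3} the left-hand side converges weakly to $\varphi_f(C,\cdot)$. By the weak continuity in Theorem~\ref{T3.3} we have $\Omega_k(C_i,\cdot)\xrightarrow{w}\Omega_k(C,\cdot)$ for each $k$, and since the coefficients $I_k(f)$ are nonnegative, the right-hand side converges weakly to $\sum_{k=0}^d I_k(f)\,\Omega_k(C,\cdot)$. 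As the two sequences are identical and the weak convergence is metrizable, their limits coincide as measures; evaluating at an arbitrary $\eta$ yields \eqref{3.10} for this $f$.

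To remove the continuity and boundedness assumptions, fix $C\in\C^d_*$ and $\eta\in\widehat\B(\R^d\times\R^d)$, and regard both sides of \eqref{3.10} as integrals of $f$ over $\R_+^2$. With $T(x):=(\|\Pi_C(x)\|^2,\|\Pi_{C^\circ}(x)\|^2)$ one has $\varphi_f(C,\eta)=\int_{\R_+^2}f\,\D\Phi$, where $\Phi$ is the image under $T$ of the finite measure $\mathbbm 1_\eta(\Pi_C(x),\Pi_{C^\circ}(x))\,\gamma_d(\D x)$; and by the explicit formulas for $I_k$ in Theorem~\ref{T3.1} one has $\sum_{k=0}^d I_k(f)\,\Omega_k(C,\eta)=\int_{\R_+^2}f\,\D\Psi$ with $\Psi:=\sum_{k=0}^d\Omega_k(C,\eta)\,\sigma_k$, where $\sigma_k$ is the finite measure on $\R_+^2$ determined by $\int f\,\D\sigma_k=I_k(f)$. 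Both $\Phi$ and $\Psi$ are finite Borel measures on $\R_+^2$, and the previous step shows $\int f\,\D\Phi=\int f\,\D\Psi$ for all continuous bounded $f\ge0$. Hence $\Phi=\Psi$, so $\int f\,\D\Phi=\int f\,\D\Psi$ for every nonnegative measurable $f$, which is exactly \eqref{3.10}; under the finiteness hypothesis both sides are finite.

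The main obstacle, in my view, is precisely this last passage from continuous bounded to arbitrary measurable $f$. The weak-convergence tool, Lemma~\ref{L3.3}, is available only for continuous bounded integrands, and weak convergence does not by itself control $\varphi_f(C_i,\eta)$ at a fixed non-open set $\eta$; so one cannot simply approximate in $C$ for general $f$. The resolution is to observe that, for fixed $C$ and $\eta$, both sides of \eqref{3.10} are governed by a single pair of finite measures on $\R_+^2$, which the continuous case already identifies. By comparison, the approximation in $C$---with the bookkeeping of the nonnegative coefficients $I_k(f)$ and the uniqueness of weak limits---is routine, being carried entirely by Lemma~\ref{L3.3} and Theorem~\ref{T3.3}.
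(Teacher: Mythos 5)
Your proposal is correct and follows essentially the same route as the paper's proof: polyhedral approximation combined with Lemma~\ref{L3.3} and Theorem~\ref{T3.3} to settle the case of continuous bounded $f$, then, for fixed $\eta$, identifying the two sides as integrals of $f$ against pushforward measures on $\R_+^2$ (your $\Phi$ and $\Psi$ are the paper's $\mu_\eta$ and $\sum_k\Omega_k(C,\eta)\mu_k$), whose equality follows from the continuous case and then extends to all nonnegative measurable $f$. Your explicit handling of the polyhedral case and the $\C^d_*$ restriction is a small tidiness improvement, not a different argument.
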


\begin{proof}
For given $C\in\C^d$, there is a sequence of polyhedral cones $C_i\in\PC^d$ converging to $C$, and by Theorem \ref{T3.1} we have
$$\varphi_f(C_i,\cdot) = \sum_{k=0}^d I_k(f)\cdot\Omega_k(C_i,\cdot)$$
for $i\in\N$, where $I_k(f)=\varphi_f(L_k,\R^d\times\R^d)$. We assume first that $f$ is continuous. Then $\varphi_f(C_i,\cdot) \xrightarrow{w} \varphi_f(C,\cdot)$ by Lemma \ref{L3.3}, and $\Omega_k(C_i,\cdot) \xrightarrow{w} \Omega_k(C,\cdot)$ by Theorem \ref{T3.3}. We conclude that
\begin{equation}\label{2.2.16}
\varphi_f(C,\cdot) = \sum_{k=0}^d I_k(f)\cdot\Omega_k(C,\cdot).
\end{equation}

We modify the argumentation of McCoy and Tropp \cite{MT14b}. We fix a Borel set $\eta\in\widehat\B(\R^d\times\R^d)$. Let $h:\R_+^2\to\R$ be bounded and continuous. We decompose $h=h^+-h^-$ with bounded continuous functions $h^+,h^-:\R_+^2\to\R_+$ and define $\varphi_h(C,\eta)=\varphi_{h^+}(C,\eta)-\varphi_{h^-}(C,\eta)$. Then we can write
\begin{eqnarray*}
\varphi_h(C,\eta) &=& \bE \left[h(\|\Pi_C({\bf g})\|^2,\|\Pi_{C^\circ}({\bf g})\|^2)\cdot{\mathbbm 1}_\eta(\Pi_C({\bf g}),\Pi_{C^\circ}({\bf g}))\right]\\
&=& \int_{B\eta} h(\|\Pi_C(x)\|^2,\|\Pi_{C^\circ}(x)\|^2)\,\gamma_d(\D x)\\
&=& \int_{\R_+^2} h(s,t)\,\mu_\eta(\D(s,t)),
\end{eqnarray*}
where
$$ B_\eta:= \{x\in\R^d: (\Pi_C(x),\Pi_{C^\circ}(x))\in\eta\}$$
and where $\mu_\eta$ is the pushforward of the restriction $\gamma_d\fed B_\eta$ under the mapping $x\mapsto (\|\Pi_C(x)\|^2,\|\Pi_{C^\circ}(x)\|^2)$.
Denoting by $\mu_k$ the pushforward of $\gamma_d$ under the mapping $x\mapsto (\|\Pi_{L_k}(x)\|^2,\|\Pi_{L_k^\circ}(x)\|^2)$ (where $L_k$ is the subspace used in (\ref{3.2})), we have
$$ \sum_{k=0}^d \varphi_h(L_k,\R^d\times\R^d)\cdot\Omega_k(C,\eta) = \sum_{k=0}^d\left(\int_{\R_+^2} h(s,t)\,\mu_k(\D(s,t))\right)\Omega_k(C,\eta).$$
Therefore, (\ref{2.2.16}) gives
$$ \int_{\R_+^2} h\,\D\mu_\eta =\int_{\R_+^2} h\,\D\left(\sum_{k=0}^d \Omega_k(C,\eta)\mu_k\right).$$
Since this holds for all bounded, continuous real functions $h$ on $\R_+^2$, it follows (e.g., from \cite[Lemma 30.14]{Bau90}) that
$$ \mu_\eta= \sum_{k=0}^d \Omega_k(C,\eta)\mu_k.$$
Integrating a nonnegative, measurable function $f$ on $\R_+^2$ with respect to these measures gives the assertion.
\end{proof}

\section{H\"older continuity of the conic support measures}\label{sec4}

The fact that the conic support measures are weakly continuous will now be improved, by establishing H\"older continuity with respect to a metric which metrizes the weak convergence. This is in analogy to the case of support measures of convex bodies, which was treated in \cite{HS15}. 

On $\R^d\times\R^d$, we use the standard Euclidean norm $\|\cdot\|$, which is thus also defined on $\Sd\times\Sd$. For a bounded real function $f$ on $\Sd\times\Sd$, we define
$$ \|f\|_L :=\sup_{a\not= b} \frac{|f(a)-f(b)|}{\|a-b\|},\qquad \|f\|_\infty:= \sup_a|f(a)|.$$
Let $\F_{bL}$ be the set of all functions $f:\Sd\times\Sd\to\R$ with $\|f\|_{L}\le 1$ and $\|f\|_\infty\le 1$. The functions in $\F_{bL}$ are continuous and hence integrable with respect to every finite Borel measure on $\Sd\times\Sd$.  We define the {\em bounded Lipschitz distance} of finite Borel measures $\mu,\nu$ on $\Sd\times\Sd$ by
$$ d_{bL}(\mu,\nu):= \sup \left\{\left| \int_{\Sd\times\Sd} f \,\D \mu - \int_{\Sd\times\Sd} f\,\D\nu \right|: f\in \F_{bL}\right\}.$$
This defines a metric $d_{bL}$. Convergence of a sequence of finite Borel measures on $\Sd\times\Sd$ with respect to this metric is equivalent to weak convergence of the sequence (see, e.g., \cite[Sect. 11.3]{Dud02}). 
 For finite measures $\mu,\nu$ on the biconic $\sigma$-algebra $\widehat\B(\R^d\times\R^d)$, we then define
$$ d_{bL}(\mu,\nu):= \sup \left\{\left| \int_{\R^d\times\R^d} f_h \,\D \mu - \int_{\R^d\times\R^d} f_h\,\D\nu \right|: f\in \F_{bL}\right\}.$$
Clearly, this defines a metric $d_{bL}$ which metrizes the weak convergence of finite measures on the biconic $\sigma$-algebra. 

In the following theorem, the assumption $\delta_a(C,D)\le 1$ is convenient, but could be relaxed to $\delta_a(C,D)\le \pi/2-\varepsilon$, with some $\varepsilon>0$. The constant $c$  would then also depend on $\varepsilon$.

\begin{theorem}\label{T4.1}
Let $C,D\in \C^d_*$ be convex cones, and suppose that $\delta_a(C,D)\le 1$. Then
$$ d_{bL}(\Omega_k(C,\cdot),\Omega_k(D,\cdot)) \le c\,\delta_a(C,D)^{1/2}$$
for $k\in\{1,\dots,d-1\}$, where $c$ is a constant depending only on $d$.
\end{theorem}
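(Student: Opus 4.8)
The plan is to reduce, via the local Steiner formula of Theorem \ref{T3.3}, the comparison of $\Omega_k(C,\cdot)$ and $\Omega_k(D,\cdot)$ to the comparison of finitely many ``shell measures'' that are accessible to the projection estimate of Lemma \ref{L2.2}. Write $\delta:=\delta_a(C,D)$ and fix once and for all a number $\lambda_0\in(0,\pi/2-1)$. For $\lambda\in(\lambda_0,\pi/2)$ set
$$ \nu_\lambda(C,\cdot):=\mu_\lambda(C,\cdot)-\mu_{\lambda_0}(C,\cdot)=\sum_{j=1}^{d-1}\bigl(g_j(\lambda)-g_j(\lambda_0)\bigr)\Omega_j(C,\cdot),$$
which is a finite positive measure (the angular parallel sets are nested), the term $j=d$ having dropped out because $g_d\equiv 1$. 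The functions $g_1,\dots,g_{d-1}$ are linearly independent on $(\lambda_0,\pi/2)$ (their derivatives are the pairwise distinct $\varphi\mapsto\cos^{j-1}\varphi\sin^{d-j-1}\varphi$), so I can pick $d-1$ points $\lambda_1,\dots,\lambda_{d-1}$ in $(\lambda_0,\pi/2)$ for which the matrix $(g_j(\lambda_i)-g_j(\lambda_0))_{i,j=1}^{d-1}$ is invertible. Inverting the linear system expresses each $\Omega_k(C,\cdot)$, $k\in\{1,\dots,d-1\}$, as $\sum_i b_{ki}\,\nu_{\lambda_i}(C,\cdot)$ with coefficients $b_{ki}$ depending only on $d$. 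By subadditivity and homogeneity of $d_{bL}$,
$$ d_{bL}(\Omega_k(C,\cdot),\Omega_k(D,\cdot))\le\sum_{i=1}^{d-1}|b_{ki}|\,d_{bL}(\nu_{\lambda_i}(C,\cdot),\nu_{\lambda_i}(D,\cdot)),$$
so it suffices to bound each $d_{bL}(\nu_\lambda(C,\cdot),\nu_\lambda(D,\cdot))$ by a constant (depending only on $d$) times $\delta^{1/2}$.

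The point of passing to shells bounded away from the cone is that the homogeneous extension $f_h$ is undefined where a projection vanishes; staying in $S_C:=\{x:\lambda_0<d_a(x,C)\le\lambda\}$ keeps both $\Pi_C(x)$ and $\Pi_{C^\circ}(x)$ off $o$ and, crucially, keeps all four angular distances $d_a(x,C),d_a(x,D),d_a(x,C^\circ),d_a(x,D^\circ)$ bounded away from $\pi/2$. Since $\nu_\lambda(C,\cdot)$ is the image of $\gamma_d\fed S_C$ under $x\mapsto(\Pi_C(x),\Pi_{C^\circ}(x))$, for $f\in\F_{bL}$ one has $\int f_h\,\D\nu_\lambda(C,\cdot)=\int_{S_C}f_h(\Pi_C(x),\Pi_{C^\circ}(x))\,\gamma_d(\D x)$, and similarly for $D$. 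I split $\R^d$ according to $S_C$ and $S_D$. On the symmetric difference $S_C\triangle S_D$ the integrand is bounded by $\|f\|_\infty\le 1$; using $|d_a(x,C)-d_a(x,D)|\le\delta$ (triangle inequality for $d_a$ together with the definition of $\delta_a$) one sees that $S_C\triangle S_D$ lies in four angular shells of width $\delta$ about the levels $\lambda_0$ and $\lambda$, each of Gaussian measure at most $C_d\delta$ by \eqref{2.2.4a} (the integrand $\cos^{j-1}\varphi\sin^{d-j-1}\varphi$ is bounded by $1$); this part contributes $O(\delta)$.

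On $S_C\cap S_D$ I use $\|f\|_L\le1$ together with $\|\bar u-\bar v\|^2=2(1-\cos d_a(u,v))$, where $\bar u:=u/\|u\|$, which reduces everything to the angular distances between the two pairs of projections. Here Lemma \ref{L2.2}, applied to $(C,D)$ for the first coordinate and to $(C^\circ,D^\circ)$ for the second, gives $1-\cos d_a(\Pi_C(x),\Pi_D(x))\le c_{\varepsilon_1}\delta$ and $1-\cos d_a(\Pi_{C^\circ}(x),\Pi_{D^\circ}(x))\le c_{\varepsilon_2}\delta$ for suitable $\varepsilon_1,\varepsilon_2>0$ depending only on $d$ (for the dual cones one uses $\delta_a(C^\circ,D^\circ)=\delta_a(C,D)$ from Lemma \ref{L2.1}). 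Hence the difference of the two integrands is $O(\delta^{1/2})$, and since $\gamma_d$ is a probability measure this part contributes $O(\delta^{1/2})$. Adding the two contributions and using $\delta\le1$ yields $d_{bL}(\nu_\lambda(C,\cdot),\nu_\lambda(D,\cdot))\le c'\delta^{1/2}$, and the theorem follows.

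The hard part is exactly the second coordinate: when $x$ is close to $C$, the vector $\Pi_{C^\circ}(x)$ is short and its direction is unstable, and $d_a(x,C^\circ)$ is near $\pi/2$, so Lemma \ref{L2.2} does not apply to the dual cones. This is what forces the lower cut-off $\lambda_0>0$ (whence the use of $\mu_\lambda-\mu_{\lambda_0}$ rather than $\mu_\lambda$), and it is also the structural reason the estimate is stated only for $k\in\{1,\dots,d-1\}$: precisely these measures are carried by pairs of nonzero vectors and are recovered by the shell measures $\nu_\lambda$, whereas $\Omega_0$ and $\Omega_d$ are supported where one coordinate is $o$ and $f_h$ is meaningless. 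The invocation of Lemma \ref{L2.1} needs $C,D\ne\R^d$ and $\delta<\pi/2$; the degenerate case in which one cone equals $\R^d$ (forcing $\Omega_k=0$ for $k\le d-1$) must be settled by a separate, and simpler, direct estimate.
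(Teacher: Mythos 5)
Your proof is correct, and its overall architecture---inverting the conic Steiner formula of Theorem \ref{T3.3} to write $\Omega_k$ as a fixed linear combination of finitely many shell measures, then estimating each shell measure in $d_{bL}$ by splitting into the symmetric difference of the two shells (controlled via the Steiner formula) and their intersection (controlled pointwise via Lemmas \ref{L2.1} and \ref{L2.2})---is the same as the paper's. The genuine difference is your lower cut-off $\lambda_0$, and it buys something real: the paper works with one-sided shells $\{x:0<d_a(x,C)\le\lambda\}$ touching the cone, and on the intersection of the two shells it applies Lemma \ref{L2.2} to the dual pair $(C^\circ,D^\circ)$ with the same fixed parameter $\varepsilon=\pi/2-1$ used for $(C,D)$. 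That application needs $d_a(x,C^\circ)\le 1$, which by (\ref{2.1}) means $d_a(x,C)\ge\pi/2-1$; this fails for $x$ close to $C$, and the pointwise conclusion can genuinely fail there (for two rays in $\R^2$ at angle $\delta$ and $x$ strictly between them, $\Pi_{C^\circ}(x)$ and $\Pi_{D^\circ}(x)$ point in nearly opposite directions, so $1-\cos d_a(\Pi_{C^\circ}(x),\Pi_{D^\circ}(x))$ is close to $2$, not $O(\delta)$). So the published proof glosses over exactly what you call ``the hard part''; your two-sided shells $\{\lambda_0<d_a(x,C)\le\lambda\}$ excise the unstable region entirely, make the dual application of Lemma \ref{L2.2} legitimate with $\varepsilon_2=\lambda_0$, and cost only the harmless replacement of the matrix $(g_j(\lambda_i))$ by $(g_j(\lambda_i)-g_j(\lambda_0))$, whose invertibility follows from linear independence as you say. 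Two details to nail down in writing: (i) in the symmetric-difference step, take the four shells about the \emph{nearer} cone, i.e. $S_C\setminus S_D\subseteq\{\lambda_0<d_a(\cdot,C)\le\lambda_0+\delta\}\cup\{\lambda-\delta<d_a(\cdot,C)\le\lambda\}$ and symmetrically, so that every upper level stays below $\pi/2$ and (\ref{2.2.4a}) applies; a shell of the form $(\lambda,\lambda+\delta]$ about the other cone is useless once $\lambda+\delta\ge\pi/2$, since it then swallows the dual cone (this wrinkle is present, unaddressed, in the paper's estimate of $\gamma_d(D^a_{\lambda+\delta})-\gamma_d(D^a_\lambda)$ as well); (ii) the degenerate case you flag is indeed trivial: if $D\ne\R^d$ there is $y\in D^\circ\setminus\{o\}$ with $d_a(y,D)=\pi/2$, so $\delta_a(\R^d,D)\ge\pi/2>1$; hence under the hypothesis either both cones equal $\R^d$, in which case $\Omega_k=0$ for all $k\le d-1$ and the distance is zero, or neither does and Lemma \ref{L2.1} is available.
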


\begin{proof}
Let $C\in\C^d_*$, $\eta\in\widehat\B(\R^d\times\R^d)$, and $0\le \lambda < \pi/2$. In contrast to the earlier definitions of $M^a_\lambda(C,\eta)$ and $C_\lambda^a$, we now need
$$ M^a_{0,\lambda}(C,\eta):= \{x\in\R^d: 0< d_a(x,C) \le \lambda,\, (\Pi_C(x),\Pi_{C^\circ}(x))\in\eta\}$$
and
$$ C^a_{0,\lambda}:= C_\lambda^a \setminus C = \{x\in\R^d : 0< d_a(x,C) \le \lambda\},$$
further
$$ \nu_\lambda(C,\eta):= \gamma_d( M^a_{0,\lambda}(C,\eta)).$$

Let $F_\lambda:C^a_{0,\lambda} \to \R^d \times \R^d$ be defined by $F_\lambda(x) := (\Pi_C(x),\Pi_{C^\circ}(x))$ for $x\in C^a_{0,\lambda}$. Then $F_\lambda$ is continuous, and for $\eta\in\widehat\B(\R^d\times\R^d)$ we have $\gamma_d(F_\lambda^{-1}(\eta)) = \nu_\lambda(C,\eta)$. Therefore, for any $\nu_\lambda(C,\cdot)$-integrable, homogeneous function $f_h$ on $\R^d\times\R^d$,
$$ \int_{\R^d\times\R^d} f_h\,\D\nu_\lambda(C,\cdot) = \int_{\C^a_{0,\lambda}} f_h\circ(\Pi_C,\Pi_{C^\circ})\,\D\gamma_d.$$

Now we assume that also $D\in\C^d_*$ and that $\delta_a(C,D)\le 1$. Let $f:\Sd\times \Sd\to\R$ be a function with $\|f\|_L\le 1$ and $\|f\|_\infty\le 1$. Then
\begin{eqnarray*}
&& \left| \int_{\R^d\times\R^d} f_h\,\D\nu_\lambda(C,\cdot) -  \int_{\R^d\times\R^d} f_h\,\D\nu_\lambda(D,\cdot) \right|\\
&&= \left| \int_{C^a_{0,\lambda}} f_h\circ(\Pi_C,\Pi_{C^\circ})\,\D\gamma_d - \int_{D^a_{0,\lambda}} f_h\circ(\Pi_D,\Pi_{D^\circ})\,\D\gamma_d \right|\\
&& \le \int_{C^a_{0,\lambda}\cap D^a_{0,\lambda}} \left| f_h\circ(\Pi_C,\Pi_{C^\circ}) -  f_h\circ(\Pi_D,\Pi_{D^\circ})\right|\D\gamma_d\\
&& \hspace{4mm} +  \int_{C^a_{0,\lambda}\setminus D^a_{0,\lambda}} \left| f_h\circ(\Pi_C,\Pi_{C^\circ})\right|\D\gamma_d +  \int_{D^a_{0,\lambda}\setminus C^a_{0,\lambda}} \left| f_h\circ(\Pi_D,\Pi_{D^\circ})\right|\D\gamma_d \\
&& \le  \int_{C^a_{0,\lambda}\cap D^a_{0,\lambda}} \left| f_h\circ(\Pi_C,\Pi_{C^\circ}) -  f_h\circ(\Pi_D,\Pi_{D^\circ})\right|\D\gamma_d + \gamma_d(C^a_{0,\lambda}\setminus D^a_{0,\lambda}) + \gamma_d(D^a_{0,\lambda}\setminus C^a_{0,\lambda}).
\end{eqnarray*}
Let $x\in C^a_{0,\lambda}\cup D^a_{0,\lambda}$, $x\not= o$, and write
$$ \frac{\Pi_C(x)}{\|\Pi_C(x)\|}=u,\quad \frac{\Pi_{C^\circ}(x)}{\|\Pi_{C^\circ}(x)\|}= u^\circ,\quad \frac{\Pi_D(x)}{\|\Pi_D(x)\|}=v, \quad \frac{\Pi_{D^\circ}(x)}{\|\Pi_{D^\circ}(x)\|}=v^\circ.$$
By the homogeneity of $f_h$ and the Lipschitz property of $f$, we have
\begin{eqnarray*}
&& \left| f_h\circ(\Pi_C,\Pi_{C^\circ}) -  f_h\circ(\Pi_D,\Pi_{D^\circ})\right|(x)\\
&&= \left| f_h(\Pi_C(x),\Pi_{C^\circ}(x)) -  f_h(\Pi_D(x),\Pi_{D^\circ}(x))\right|\\
&&= |f(u,u^\circ) -f(v,v^\circ)|\\
&& \le \|(u,u^\circ) - (v,v^\circ)\|  \le \|u-v\| +\|u^\circ-v^\circ\|\\
&&= 2\sin\frac{1}{2}d_a(u,v) + 2\sin\frac{1}{2}d_a(u^\circ,v^\circ)\\
&&= 2\sin\frac{1}{2}d_a(\Pi_C(x),\Pi_D(x)) + 2\sin\frac{1}{2}d_a(\Pi_{C^\circ}(x),\Pi_{D^\circ}(x)),
\end{eqnarray*}
where $ \|u-v\|= 2\sin\frac{1}{2}d_a(u,v)$ for unit vectors $u,v$ was used. By Lemma \ref{L2.2},
$$ 2\sin^2\frac{1}{2}d_a(\Pi_C(x),\Pi_D(x)) = 1- \cos d_a(\Pi_C(x),\Pi_D(x)) \le c_\varepsilon \delta_a(C,D),$$
with $\varepsilon =(\pi/2)-1$. By Lemma \ref{L2.1}, we have $\delta_a(C^\circ,D^\circ) =\delta_a(C,D)$, hence also
$$  2\sin^2\frac{1}{2}d_a(\Pi_{C\circ}(x),\Pi_{D^\circ}(x))  \le c_\varepsilon \delta_a(C,D).$$
This yields
$$ \int_{C^a_{0,\lambda}\cap D^a_{0,\lambda}} \left| f_h\circ(\Pi_C,\Pi_{C^\circ}) - f_h\circ(\Pi_D,\Pi_{D^\circ})\right|\D\gamma_d \le b \delta_a(C,D)^{1/2}$$
with $b = 2\sqrt{2c_\varepsilon}$.

Write $\delta_a(C,D)=:\delta$. To estimate $\gamma_d(C^a_{0,\lambda}\setminus D^a_{0,\lambda})$, let $x\in C^a_{0,\lambda}\setminus D^a_{0,\lambda}$. Then $x\in C_\lambda^a\setminus C$ and $x\notin D_\lambda^a\setminus D$. If $x\in D$, then $d_a(x,C)\le\delta$, hence $x\in C_\delta^a\setminus C$. If $x\notin D$, then $x\notin D_\lambda^a$, but $x\in C_\lambda^a\subseteq (D_\delta^a)_\lambda^a\subseteq D_{\lambda+\delta}^a$, thus $x\in D_{\lambda+\delta}^a\setminus D_\lambda^a$. It follows that
$$ C^a_{0,\lambda} \setminus D^a_{0,\lambda} \subseteq (C_\delta^a\setminus C) \cup (D_{\lambda+\delta}^a\setminus D_\lambda^a).$$
Therefore,
$$ \gamma_d(C^a_{0,\lambda}\setminus D^a_{0,\lambda}) \le \gamma_d(C_\delta^a)-\gamma_d(C) +\gamma_d(D_{\lambda+\delta}^a)-\gamma_d(D_\lambda^a).$$
By (\ref{2.2.4a}) (with $\eta=\R^d\times\R^d$),
$$ \gamma_d(C_\delta^a)-\gamma_d(C) = \sum_{k=1}^{d-1} g_k(\delta)v_k(C) \le \sum_{k=1}^{d-1}\frac{\omega_k\omega_{d-k}}{\omega_d}\cdot \delta=: C(d)\delta,$$
where (\ref{2.2.5}) and $v_k\le 1$ were observed. Similarly, $\gamma_d(D_{\lambda+\delta}^a)-\gamma_d(D_\lambda^a)\le C(d)\delta$. Here $C$ and $D$ can be interchanged, and we obtain
$$ \gamma_d(C^a_{0,\lambda}\setminus D^a_{0,\lambda}) + \gamma_d(D^a_{0,\lambda}\setminus C^a_{0,\lambda}) \le 4 C(d)\delta.$$
Altogether, we have obtained
$$ \left| \int_{\R^d\times\R^d} f_h\,\D\nu_\lambda(C,\cdot) -  \int_{\R^d\times\R^d} f_h\,\D\nu_\lambda(D,\cdot) \right| \le b\delta_a(C,D)^{1/2}+  4 C(d)\delta_a(C,D).$$
Since this holds for all functions $f:\Sd\times \Sd\to\R$ with $\|f\|_L\le 1$ and $\|f\|_\infty\le 1$, and since $\delta_a(C,D)\le 1$, we conclude that
$$ d_{bL}(\nu_\lambda(C,\cdot),\nu_\lambda(D,\cdot)) \le b_1\delta_a(C,d)^{1/2},$$
with a constant $b_1$ depending only on $d$.

By Theorem \ref{T3.3} (and observing that $M_{0,\lambda}^a(C,\eta)\cap C=\emptyset$), we have
$$ \nu_\lambda(C,\eta) = \sum_{k=1}^{d-1} g_k(\lambda)\cdot\Omega_k(C,\eta).$$
Since the coefficient functions $g_1,\dots,g_{d-1}$ are linearly independent on $[0,1]$, we can choose numbers $0<\lambda_1<\dots<\lambda_{d-1} < 1$ and numbers $a_{ij}$, depending only on $i$ and $j$, such that
$$ \Omega_i(C,\cdot) =\sum_{i=1}^{d-1} a_{ij}\nu_{\lambda_j}(C,\cdot)\quad\mbox{for } i=1,\dots,d-1$$
(details are in Glasauer's \cite{Gla95} proof of the spherical version of Theorem \ref{T3.3}). Using the definition of the bounded Lipschitz metric, we now obtain
$$ d_{bL}(\Omega_i(C,\cdot),\Omega_i(D,\cdot)) \le \sum_{i=1}^{d-1} |a_{ij}|d_{bL}\left(\nu_{\lambda_j}(C,\cdot),\nu_{\lambda_j}(D,\cdot)\right) \le c\,\delta_a(C,D)^{1/2}$$
with a suitable constant $c$. This completes the proof. 
\end{proof}

\noindent Author's address:\\[2mm]Rolf Schneider\\Mathematisches Institut, Albert-Ludwigs-Universit{\"a}t\\D-79104 Freiburg i. Br., Germany\\E-mail: rolf.schneider@math.uni-freiburg.de


\begin{thebibliography}{10}

\bibitem{Ale37a} Aleksandrov,  A.D.,  Zur Theorie der gemischten Volumina von konvexen K\"{o}rpern, I. Verallgemeinerung einiger Begriffe der Theorie der konvexen K\"{o}rper (in Russian). \textit{Mat. Sbornik} N. S. 
\textbf{2} (1937), 947--972. (English translation in \cite{Ale96})

\bibitem{Ale96} Aleksandrov, A.D. [Alexandrov,  A.D.], {\em Selected works. Part I: Selected scientific papers.} (Yu.G. Reshetnyak, S.S. Kutateladze, eds.), transl. from  the Russian by P.S. Naidu. Classics of Soviet Mathematics, 4. Gordon and Breach, Amsterdam, 1996.

\bibitem{Ame15} Amelunxen, D., Measures on polyhedral cones: characterizations and kinematic formulas. (Preprint) arXiv:1412.1569v2 (2015).

\bibitem{AB12} Amelunxen, D., B\"urgisser, P., Intrinsic volumes of symmetric cones. (Extended version of \cite{AB15}) arXiv:1205.1863 (2012).

\bibitem{AB15} Amelunxen, D., B\"urgisser, P., Intrinsic volumes of symmetric cones and applications in convex programming. {\em Math. Programm., Ser. A} {\bf 149}, 105--130  (2015).

\bibitem{AL17} Amelunxen, D., Lotz, M., Intrinsic volumes of polyhedral cones: a combinatorial perspective. {\em Discrete Comput. Geom.} {\bf 58} (2017), 371--409.

\bibitem{ALMT14} Amelunxen, D., Lotz, M., McCoy, M.B., Tropp, J.A., Living on the edge: phase transitions in convex programs with random data. {\em Inf. Inference} {\bf 3} (2014), 224--294.

\bibitem{Ash72} Ash, R.B., {\em Measure, Integration, and Functional Analysis.} Academic Press, New York, 1972.

\bibitem{Bau90} Bauer, H., {\em Ma{\ss}- und Integrationstheorie.} Walter de Gruyter, Berlin, 1990.

\bibitem{Dud02} Dudley, R.M., {\em Real Analysis and Probability.} Cambridge University Press, New York, 2002.

\bibitem{Fed59} Federer, H., Curvature measures. \emph{Trans. Amer. Math. Soc.} \textbf{93} (1959), 418--491.

\bibitem{FJ38} Fenchel, W., Jessen, B., Mengenfunktionen und konvexe K\"{o}rper. \textit{Danske Vid. Selskab. Mat.-fys. Medd.} \textbf{16}, 3 (1938), 31 pp.

\bibitem{Gla95}  Glasauer, S., Integralgeometrie konvexer K\"orper im sph\"arischen Raum. Doctoral Thesis, Albert-Ludwigs-Universit\"at, Freiburg i.~Br. (1995).\\Available from: http://www.hs-augsburg.de/$\sim$glasauer/publ/diss.pdf

\bibitem{Gla96} Glasauer, S., Integral geometry of spherically convex bodies. {\em Diss. Summ. Math.} {\bf 1} (1996), 219--226.

\bibitem{Gla97} Glasauer, S., A generalization of intersection formulae of integral geometry. {\em Geom. Dedicata} {\bf 68} (1997), 101--121.

\bibitem{GNP17} Goldstein, L., Nourdin, I., Peccati, G., Gaussian phase transition and conic intrinsic volumes: Steining the Steiner formula. {\em Ann. Appl. Prob.} {\bf 27} (2017), 1--47.

\bibitem{HS02} Hug, D., Schneider, R., Kinematic and Crofton formulae of integral geometry: recent variants and extensions. In {\em Homenatge al professor  Llu\'{\i}s Santal\'{o} i Sors} (C. Barcel\'{o} i Vidal, ed.), pp. 51 -- 80, Universitat de Girona, 2002.

\bibitem{HS15} Hug, D., Schneider, R., H\"older continuity for support measures of convex bodies. {\em Arch. Math.} {\bf 104} (2015), 83--92.

\bibitem{MT14a} McCoy, M.B., Tropp, J.A., Sharp recovery bounds for convex demixing, with applications. {\em Found. Comput. Math.} {\bf 14} (2014), 503--567.

\bibitem{MT14b} McCoy, M.B., Tropp, J.A., From Steiner formulas for cones to concentration of intrinsic volumes.  {\em Discrete Comput. Geom.} {\bf 51} (2014), 926--963.

\bibitem{Mor62} Moreau, J.J., D\'{e}composition orthogonale d'un espace hilbertien selon deux cones mutuellement polaires. {\em C. R. Acad. Sci.} {\bf 255} (1962), 238--240.

\bibitem{Sch75a} Schneider, R., Kinematische Ber\"{u}hrma{\ss}e f\"{u}r konvexe K\"{o}rper. \textit{Abh. Math. Sem. Univ. Hamburg} \textbf{44} (1975), 12--23.

\bibitem{Sch75b} Schneider, R., Kinematische Ber\"{u}hrma{\ss}e f\"{u}r konvexe K\"{o}rper und Integralrelationen f\"{u}r Oberfl\"{a}chenma{\ss}e. \textit{Math. Ann.} \textbf{218} (1975), 253--267.

\bibitem{Sch78}  Schneider, R.,  Curvature measures of convex bodies. {\em Ann. Mat. Pura Appl.} {\bf 116} (1978), 101--134.

\bibitem{Sch79} Schneider, R., Bestimmung konvexer K\"{o}rper durch Kr\"{u}mmungsma{\ss}e. \textit{Comment. Math. Helvet.} \textbf{54} (1979), 42--60.

\bibitem{Sch79b} Schneider, R., Boundary structure and curvature of convex bodies. In {\em Contributions to Geometry, Proc. Geometry Symp.,} Siegen 1978 (J. T\"{o}lke, J.M. Wills, eds.), pp. 13--59, Birkh\"{a}user, Basel, 1979.

\bibitem{Sch86} Schneider, R., Curvature measures and integral geometry of convex bodies, II. {\em Rend. Sem. Mat. Univ. Politecn. Torino} {\bf 44} (1986), 263--275.

\bibitem{Sch88} Schneider, R., Curvature measures and integral geometry of convex bodies, III. {\em Rend. Sem. Mat. Univ. Politecn. Torino} {\bf 46} (1988), 111--123.

\bibitem{Sch93} Schneider, R., Convex surfaces, curvature and surface area measures. In: {\em Handbook of Convex Geometry}, vol. A (P.M. Gruber, J.M. Wills, eds.), North-Holland, Amsterdam 1993, pp. 273--299.

\bibitem{Sch14} Schneider, R., {\em Convex Bodies: The Brunn--Minkowski Theory.} 2nd edn., Encyclopedia of Mathematics and Its Applications, vol. {\bf 151}, Cambridge University Press, Cambridge, 2014.

\bibitem{Sch17} Schneider, R., Intersection probabilities and kinematic formulas for polyhedral cones. {\em Acta Math. Hungar.} {\bf 155} (2018), 3--24.

\bibitem{SW08} Schneider, R.,  Weil, W., {\em Stochastic and Integral Geometry.} Springer, Berlin, 2008.

\bibitem{SW93} Schneider, R., Wieacker, J.A., Integral geometry. In: {\em Handbook of Convex Geometry}, vol. B (P.M. Gruber, J.M. Wills, eds.), North-Holland, Amsterdam 1993, pp. 1349--1390.

\end{thebibliography}
\end{document}